\theoremstyle{plain}
    \newtheorem{thm}{Theorem}[section]
       \newtheorem{lem}{Lemma}[section]
       \newtheorem{cor}{Corollary}[section]
       \newtheorem{defn}{Definition}[section]
       \newtheorem{rem}{Remark}[section]
\numberwithin{equation}{section}
\begin{document}
\title{Hausdorff and fractal dimensions of  attractors for functional  differential equations in Banach spaces}

\author{Wenjie Hu$^{1,2}$\footnote{Corresponding author.  E-mail address: wenjhu@hunnu.edu.cn (Wenjie Hu)},  Tom\'{a}s
Caraballo$^{3,4}$
\\
\small  1. The MOE-LCSM, School of Mathematics and Statistics,  Hunan Normal University,\\
\small Changsha, Hunan 410081, China\\
\small  2. Journal House, Hunan Normal University, Changsha, Hunan 410081, China\\
\small 3 Dpto. Ecuaciones Diferenciales y An\'{a}lisis Num\'{e}rico, Facultad de Matem\'{a}ticas,\\
\small  Universidad de Sevilla, c/ Tarfia s/n, 41012-Sevilla, Spain\\
\small 4 Department of Mathematics, Wenzhou University, \\
\small  Wenzhou, Zhejiang Province, 325035, China.
}

\date {}
\maketitle

\begin{abstract}
The main objective of this paper is  to obtain estimations of  Hausdorff dimension as well as fractal dimension  of  global attractors and pullback attractors for both autonomous and nonautonomous  functional differential equations (FDEs) in Banach spaces. New criterions  for the finite Hausdorff dimension and fractal dimension of attractors in Banach spaces are firslty proposed by combining the squeezing property and the covering of finite subspace of Banach spaces,  which generalize the method established in Hilbert spaces. In order to surmount the barrier caused by the lack of orthogonal projectors with finite rank, which is the key tool for  proving the squeezing property of partial differential equations in Hilbert spaces, we adopt the state decomposition of phase space based on the exponential dichotomy of the studied FDEs to obtain similar squeezing property. The theoretical results are applied to a retarded nonlinear reaction-diffusion equation and a non-autonomous retarded functional differential equation in the natural phase space, for which explicit bounds of dimensions that  do  not depend on the entropy number but only depend on the spectrum of the linear parts  and  Lipschitz constants of the nonlinear parts are obtained.
\end{abstract}

\bigskip

{\bf Key words:} {\em global attractors, pullback attractors, Hausdorff dimension, fractal dimension, delay, Banach space, retarded reaction-diffusion equation, functional differential equations}

\section{Introduction}
For infinite dimensional dynamical systems generated by partial differential equations or delay differential equations, the phase spaces are generally not locally compact, while the existence of attractors can  reduce the essential parts of the flows to  compact sets. Furthermore, if the attractors have finite topological dimensions (including Hausdorff dimension and fractal dimension), then the attractors  can be described by a finite number of parameters and hence the dynamics of the infinite dynamical systems are likely to be studied by the concepts and methods of finite dimensional dynamical systems. Owing to this, the study of attractors as well as their topological dimensions estimation have received much attention from pure and applied mathematics community during the past decades.

The theory of  existence of attractors for deterministic infinite dimensional dynamical systems, especially for a large class of parabolic partial differential equations and delay differential equations, is now well developed. See, for instance, the monographs by Babin and Vishik \cite{2}, Hale \cite{HJ}, Ladyzhenskaya \cite{27}, Robinson \cite{34} and Temam \cite{37}. With respect to the dimensions estimation of attractors for infinite dimensional dynamical systems, there are several methods. The first one is the squeeze method in Hilbert spaces, which dates  back  to the pioneering work of  Foias and Temam \cite{T14}, where they showed that under some circumstances a three dimensional flow depends indeed on a finite number of parameters. Then, the idea was adopted in \cite{BV85} and \cite{L85} to show the finite Hausdorff dimensionality of the global attractor for the Navier-Stokes equations, which was then extended to the random case by Debussche in \cite{DA97}. The idea has also been extended to investigate the exponential attractors of deterministic partial differential equations in \cite{BN95,EFK98,EFNT94,M99} and stochastic partial differential equations in \cite{14,ZM}, which implies the finite fractal  dimensions  of attractors.

The second method is to compute traces of some linear operators generated by the linearization  of the equations, requiring quasi-differentials of the underlying systems, which originates from the early work \cite{T8}  with an emphasise on the Hausdorff  dimension. It was then  further extended by Foias and Temam  in their later work \cite{T6}, where they also investigated fractal dimension and the relationship between the Hausdorff  dimension  of attractors and Lyapunov exponents and Lyapunov numbers. The method  has also been adopted to study the  dimensions of attractors for a variety of evolution equations. See, for instance, the nonautonomous PDEs in bounded domain \cite{CV94} and unbounded domain \cite{CE00}, the retarded semilinear partial differential equations in \cite{SW91} and retarded  Navier-Stokes-Voight equation in \cite{QY}. More examples can be found in Babin and Vishik \cite{2} and Temam \cite{37} and the references therein.

The above mentioned dimensions estimation methods are mainly obtained in  Hilbert spaces, i.e., the phase space endowed with a smooth inner product geometrical structure. Nevertheless, there are many evolution equations arising from real world modelings defined in Banach spaces, such as the  delay differential equations \cite{JH}, delay partial differential  equations \cite{WJ} and the non-autonomous Chafee-Infante equation \cite{5} and so on. Although, in \cite{JM76,QY,SW91}, the authors studied dimensions of global attractors for ordinary or partial functional differential equations,  they recast the equations into Hilbert spaces. Therefore, one natural question arises, what can we say about the dimensions of attractors for FDEs in the natural phase space, i.e., the Banach spaces? M\'{a}lek, Ruzicka and  Th\"{a}ter \cite{C25} established a third method for estimating fractal dimension based on a smoothing property of the system, which allows the phase space to be  Banach spaces, but requires an auxiliary space that is compactly embedded into the phase space. The method was then extended by  Efendiev, Miranville  and  Zelik \cite{C17,C18}  to construct  pullback and uniform exponential attractors for systems in Banach spaces, which has also been widely used in estimating the fractal dimension and construct exponential attractors of deterministic systems \cite{C9,CC,C21,C20} and random systems \cite{C5,C32,C37}.

Although, the third method is effective for systems in Banach spaces, the estimation of the fractal dimension depends on the choice of another embedding space which may vary from space to space. Furthermore, the dimension estimation depends on the entropy number between two spaces for which is generally quite difficult to obtain an explicit bound. Hence, one naturally wonders whether we can give explicit bounds of topological dimensions of attractors for systems in Banach spaces that only depend on the inner characteristic of the system. The only works tackling topological dimensions estimation of invariant sets for nonlinear maps or attractors for infinite dimensional dynamical systems in Banach spaces that we can find are \cite{30} and \cite{C6}. In \cite{30}, Ma\~n\'{e} showed that negative invariant sets for certain nonlinear maps in Banach spaces have finite fractal dimension, which also imply the finite dimensionality of Hausdorff dimension and was then improved by \cite{C6}. Nevertheless, in \cite{30}, the results are obtained under the assumption that the derivatives of map are bounded. In this paper, we establish a new method by combining the squeezing property obtained by exponential dichotomy and the covering lemma of the finite dimensional subspace of Banach space established in \cite{30}. We do not need the strict restriction of the boundedness of derivatives and we also provide explicit bounds of the dimensions  of the invariant sets in Banach spaces that only depend on the spectrum of the linearized system and state decomposition of the phase space while not relate  to the entropy number as \cite{C9,CC,C17,C18,C21,C20,C25} did.

We organize the remaining part of this paper  as follows. In Section 2, we establish the criteria for both Hausdorff and fractal dimensions estimation for autonomous systems. The corresponding results for nonautonomous systems are obtained in Section 3 followed by applications to a retarded nonlinear reaction-diffusion equations and a non-autonomous retarded functional differential equations which depend only on the spectrum of the linear part and Lipschitz constants of the nonlinear terms are given  in Section 4. At last, we summarize the paper and point out some potential directions for future research in Section 5.

\section{Dimensions of attractors for autonomous systems}
In this section, we study the Hausdorff and fractal dimensions of the attractors for autonomous dynamical systems. Let $X$ be a Banach space with norm $\|\cdot\|_{X}$   and $S(t): X\rightarrow X, t\geq 0$ be a semigroup. A set $\mathcal{A}$  is said to be invariant, if $S(t)\mathcal{A}=\mathcal{A}$. Moreover, the invariant set $\mathcal{A}$ is said to be a global attractor if $\mathcal{A}$ is a maximal compact invariant set which attracts each bounded set $B \subset X$. The Hausdorff dimension of the compact set $\mathcal{A}\subset X$ is
$$
d_{H}(\mathcal{A})=\inf \left\{d: \mu_{H}(\mathcal{A}, d)= 0 \right\}
$$
where, for $d \geq 0$,
$$\mu_{H}(\mathcal{A}, d)=\lim _{\varepsilon \rightarrow 0} \mu_{H}(\mathcal{A}, d, \varepsilon)$$
 denotes the $d$-dimensional Hausdorff measure of the set $\mathcal{A}\subset X$, where
 $$\mu_{H}(\mathcal{A}, d, \varepsilon)=\inf \sum_{i} r_{i}^{d}$$
 and the infimum is taken over all coverings of $\mathcal{A}$ by balls of radius $r_{i} \leqslant \varepsilon$. It can be shown that there exists $d_{H}(\mathcal{A}) \in[0,+\infty]$ such that $\mu_{H}(\mathcal{A}, d)=0$ for $d>d_{H}(\mathcal{A})$ and $\mu_{H}(\mathcal{A}, d)=\infty$ for $d<d_{H}(\mathcal{A})$. $d_{H}(\mathcal{A})$ is called the Hausdorff dimension of $\mathcal{A}$.

For a finite dimensional subspace $F$ of a Banach space $X$, denote by $B^F_r(x)$ the ball in $F$ of center $x$ and radius $r$, that is $B^F_r(x)=\{y \in F |\|y-x\|\leq r\}$. It is proved in \cite{30} that the following covering lemma of balls in finite dimensional Banach spaces is true.
\begin{lem}\label{lem3.1}
For every finite dimensional subspace $F$ of a Banach space $X$, we have
 \begin{equation}\label{3.1}
N\left(r_1, B_{r_2}^F\right)\leq m 2^m\left(1+\frac{r_1}{r_2}\right)^m,
\end{equation}
for all  $r_1>r_2>0$, where $m=\operatorname{dim} F$ and $N\left(r_1, B_{r_2}^F(0)\right)$ is the minimum number of balls needed to cover the ball of radius $r_1$ by balls $B_{r_2}^F(0)$ of radius $r_2$  calculated in the metric space $X$.
\end{lem}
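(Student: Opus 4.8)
The plan is to run the classical volume/packing argument entirely inside the $m$-dimensional normed space $(F,\|\cdot\|_X)$, the one mild subtlety being that an abstract subspace of a Banach space carries no canonical notion of volume. So the first step is to manufacture one: fix a linear isomorphism $T:\mathbb{R}^m\to F$ and push Lebesgue measure forward, setting $\lambda(A)=\mathrm{Leb}_m(T^{-1}A)$ for Borel $A\subseteq F$. Being a push-forward of Lebesgue measure under a linear map, $\lambda$ is translation invariant and scales as $\lambda(rA)=r^m\lambda(A)$ under dilations of $F$; hence $\lambda\big(B^F_r(x)\big)=v\,r^m$ for all $x\in F$ and $r>0$, where $v:=\lambda\big(B^F_1(0)\big)\in(0,\infty)$ since $B^F_1(0)$ is bounded with nonempty interior in $F$.

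Next I would pick a maximal subset $\{x_1,\dots,x_N\}\subseteq B^F_{r_1}(0)$ which is $r_2$-separated, meaning $\|x_i-x_j\|_X>r_2$ whenever $i\ne j$. Any such separated set is automatically finite: the balls $B^F_{r_2/2}(x_i)$ are pairwise disjoint (a common point would force $\|x_i-x_j\|_X\le r_2$ by the triangle inequality) and all lie in $B^F_{r_1+r_2/2}(0)$, so comparing $\lambda$-measures gives $N\,v\,(r_2/2)^m\le v\,(r_1+r_2/2)^m$; in particular a maximal such set exists. This single estimate does double duty. On the one hand it yields
\[
N\le\Big(\frac{r_1+r_2/2}{r_2/2}\Big)^m=\Big(1+\frac{2r_1}{r_2}\Big)^m.
\]
On the other hand, maximality means no further point of $B^F_{r_1}(0)$ can be added without destroying $r_2$-separation, so every $y\in B^F_{r_1}(0)$ satisfies $\|y-x_i\|_X\le r_2$ for some $i$; that is, the $N$ balls $B^F_{r_2}(x_i)$ cover $B^F_{r_1}(0)$, whence $N(r_1,B^F_{r_2})\le N$.

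Combining the two bounds and then a harmless arithmetic step — $1+2r_1/r_2\le 2(1+r_1/r_2)$ together with $m\ge 1$, so $(1+2r_1/r_2)^m\le 2^m(1+r_1/r_2)^m\le m\,2^m(1+r_1/r_2)^m$ — gives precisely \eqref{3.1}. (The argument in fact delivers the sharper constant $(1+2r_1/r_2)^m$; the looser bound as stated is all that is needed in the sequel.)

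I do not anticipate a genuine obstacle. The only step that requires any thought is the measure-theoretic set-up in an abstract Banach space, and it is disposed of at the outset by transporting Lebesgue measure through a linear chart $T$; everything afterwards is the routine ``a maximal net is a net, and the associated half-balls pack'' reasoning. If one prefers to avoid measure theory entirely, an alternative is to fix an Auerbach basis of $F$, enclose $B^F_{r_1}(0)$ in the corresponding coordinate box, and cover that box by a grid of sub-boxes each of $X$-diameter at most $r_2$; this is slightly lossier in the numerical constants but reaches a bound of exactly the same shape.
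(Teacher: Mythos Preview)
Your argument is correct. The paper itself does not prove this lemma; it simply attributes the result to Ma\~n\'e \cite{30} and quotes the bound. Your volume/packing argument---transport Lebesgue measure to $F$ via a linear chart, take a maximal $r_2$-separated subset of $B^F_{r_1}(0)$, compare the volume of the disjoint half-balls against the enlarged ball, and then observe that maximality forces the full-radius balls to cover---is the standard proof and in fact yields the sharper constant $(1+2r_1/r_2)^m$, from which the stated bound follows by the trivial inequalities you note. There is nothing to compare against in the paper beyond the citation, and no gap in what you wrote.
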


To prove the existence of finite Hausdorff dimension of the attractors for a semigroup $\{S(t)\}_{t\geq 0}$ in a Banach space, we impose the following additional  assumptions on its attractor $\mathcal{A}$.

$\mathbf{Hypothesis\  A1}$ There is a finite dimensional projection $P:X\rightarrow PX$ with a finite dimension \begin{equation}\label{2.2}
\Lambda=\dim\{PX\},
\end{equation}
and there exist three positive numbers $t_0, M_1, M_2, M_3$  and two constants $\lambda_0$ and $\lambda_1$ such that
 \begin{equation}\label{3.2}
\left\|P S(t_0)\varphi-P S(t_0)\psi\right\| \leq M_1e^{\lambda_0 t_0}\left\|\varphi-\psi\right\|
\end{equation} 
and
 \begin{equation}\label{2.4}
\begin{gathered}
\left\|(I-P) S(t_0)\varphi-(I-P) S(t_0)\psi\right\|\leq (M_2e^{\lambda_1 t_0}+M_3e^{\lambda_0 t_0})\left\|\varphi-\psi\right\|,
\end{gathered}
\end{equation}
for any  $\varphi, \psi$ in $\mathcal{A}$. 

We can now prove an upper bound for the Hausdorff dimension of the attractors for a semigroup $S(t)$ in a Banach space under $\mathbf{Hypothesis\  A1}$.
\begin{thm}\label{thm3.1} Assume that $\{S(t)\}_{t\geq 0}$ is a continuous semigroup with  global attractor $\mathcal{A}$, $\mathbf{Hypothesis\  A1}$ holds and there exists $0<\alpha<2$ such that
 \begin{equation}\label{3.4a}
\alpha M_1e^{\lambda_0 t_0}+2M_2e^{\lambda_1 t_0}+2M_3e^{\lambda_0 t_0}<1.
\end{equation}
Then, the Hausdorff dimension of the global attractor $\mathcal{A}$ satisfies
 \begin{equation}\label{3.4}
d_{H}<\frac{-\ln\Lambda-\Lambda\ln(2+\frac{4}{\alpha})}{\ln (\alpha M_1e^{\lambda_0 t_0}+2M_2e^{\lambda_1 t_0}+2M_3e^{\lambda_0 t_0})},
\end{equation} 
where $\Lambda$ is the dimension of $PX$ defined by \eqref{2.2} and $M_1, M_2, M_3, \lambda_0$ and $\lambda_1$ are given in $\mathbf{Hypothesis\  A1}$.
\end{thm}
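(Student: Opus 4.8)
The plan is to exploit the invariance $S(t_0)\mathcal{A}=\mathcal{A}$ together with the splitting furnished by $\mathbf{Hypothesis\ A1}$ to show that any cover of $\mathcal{A}$ by balls of radius $r$ can be converted into a cover by at most $\Lambda(2+\frac4\alpha)^\Lambda$ times as many balls of radius $\kappa r$, where $\kappa:=\alpha M_1e^{\lambda_0 t_0}+2M_2e^{\lambda_1 t_0}+2M_3e^{\lambda_0 t_0}<1$ by \eqref{3.4a}; iterating this $n$ times and letting $n\to\infty$ in the definition of the Hausdorff measure then produces the stated bound. Concretely, let $N(\rho)$ denote the minimal number of balls of radius $\rho$ in $X$ needed to cover $\mathcal{A}$, which is finite for every $\rho>0$ since $\mathcal{A}$ is compact. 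Fix $r>0$ and a minimal cover $\mathcal{A}\subset\bigcup_{i=1}^{N(r)}B_r(x_i)$; after discarding the balls that miss $\mathcal{A}$ and choosing, for each remaining index $i$, a point $\psi_i\in\mathcal{A}\cap B_r(x_i)$, we have $\|\psi-\psi_i\|\le 2r$ for every $\psi\in\mathcal{A}\cap B_r(x_i)$.

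Next I would apply the squeezing estimates to each piece $\mathcal{A}\cap B_r(x_i)$. Writing $S(t_0)\psi=P S(t_0)\psi+(I-P)S(t_0)\psi$ and using \eqref{3.2} and \eqref{2.4} with $\varphi=\psi_i$, the $P$-component $P S(t_0)\psi$ lies in the ball $B^{F}_{2M_1e^{\lambda_0 t_0}r}(P S(t_0)\psi_i)$ of the $\Lambda$-dimensional subspace $F:=PX$, while $(I-P)S(t_0)\psi$ stays within distance $2(M_2e^{\lambda_1 t_0}+M_3e^{\lambda_0 t_0})r$ of $(I-P)S(t_0)\psi_i$. Since $0<\alpha<2$, Lemma~\ref{lem3.1} applies with $r_1=2M_1e^{\lambda_0 t_0}r>r_2:=\alpha M_1e^{\lambda_0 t_0}r$ and covers $B^{F}_{2M_1e^{\lambda_0 t_0}r}(P S(t_0)\psi_i)$ by at most $\Lambda 2^{\Lambda}\left(1+\frac{2}{\alpha}\right)^{\Lambda}=\Lambda\left(2+\frac{4}{\alpha}\right)^{\Lambda}$ balls of radius $\alpha M_1e^{\lambda_0 t_0}r$. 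Adding back the bounded $(I-P)$-component, $S(t_0)(\mathcal{A}\cap B_r(x_i))$ is covered by $\Lambda(2+\frac4\alpha)^\Lambda$ balls of $X$ of radius $\alpha M_1e^{\lambda_0 t_0}r+2(M_2e^{\lambda_1 t_0}+M_3e^{\lambda_0 t_0})r=\kappa r$. Taking the union over $i$ and using $S(t_0)\mathcal{A}=\mathcal{A}$ gives $N(\kappa r)\le \Lambda(2+\frac4\alpha)^\Lambda N(r)$, hence $N(\kappa^n r)\le\big(\Lambda(2+\frac4\alpha)^\Lambda\big)^{n} N(r)$ for all $n\ge1$.

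Finally I would feed this into the Hausdorff measure. For $d\ge0$ and $n$ large enough that $\kappa^n r\le\varepsilon$, the cover above yields $\mu_H(\mathcal{A},d,\kappa^n r)\le N(\kappa^n r)(\kappa^n r)^d\le N(r)\,r^d\big(\Lambda(2+\tfrac4\alpha)^\Lambda\kappa^{d}\big)^{n}$. Letting $n\to\infty$, the right-hand side tends to $0$ whenever $\Lambda(2+\tfrac4\alpha)^\Lambda\kappa^{d}<1$, i.e., taking logarithms and using $\ln\kappa<0$, whenever $d>\dfrac{-\ln\Lambda-\Lambda\ln(2+\frac4\alpha)}{\ln\kappa}$. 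For every such $d$ we obtain $\mu_H(\mathcal{A},d)=0$, so $d_H(\mathcal{A})$ is bounded by the right-hand side of \eqref{3.4}, which is the assertion.

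The step I expect to be the main obstacle is the one that replaces the Hilbert-space machinery: without finite-rank orthogonal projections one cannot argue directly that the image of a small ball is trapped near a finite-dimensional set, so the entire argument hinges on first extracting precisely this geometry from $\mathbf{Hypothesis\ A1}$ and then covering a ball of a general (non-Euclidean) finite-dimensional normed space efficiently, which is exactly the content of Lemma~\ref{lem3.1}. A secondary, purely bookkeeping difficulty is to tune the intermediate radius $r_2=\alpha M_1e^{\lambda_0 t_0}r$ so that the contraction factor comes out exactly as the quantity in \eqref{3.4a}–\eqref{3.4} while keeping the multiplicative constant $\Lambda(2+\frac4\alpha)^\Lambda$ independent of $r$; the hypothesis $\alpha<2$ is precisely what makes Lemma~\ref{lem3.1} applicable at that step (it guarantees $r_1>r_2$).
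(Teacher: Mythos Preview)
Your proposal is correct and follows essentially the same route as the paper: recenter each ball of an initial cover at a point of $\mathcal{A}$, apply the two inequalities of $\mathbf{Hypothesis\ A1}$ to split $S(t_0)\psi$ into its $P$- and $(I-P)$-parts, invoke Lemma~\ref{lem3.1} (with $r_1=2M_1e^{\lambda_0 t_0}r$, $r_2=\alpha M_1e^{\lambda_0 t_0}r$, using $\alpha<2$) to cover the finite-dimensional piece, reassemble to get a cover of $\mathcal{A}=S(t_0)\mathcal{A}$ by balls of radius $\kappa r$, and iterate. The only cosmetic difference is that the paper works directly with covers by balls of variable radii $r_i\le\varepsilon$ and tracks $\mu_H(\mathcal{A},d,\varepsilon)$ through the iteration, whereas you phrase the same contraction in terms of the fixed-radius covering number $N(\rho)$ and only pass to the Hausdorff measure at the end; the arithmetic and the resulting bound are identical.
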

\begin{proof}
Since $\mathcal{A}$ is a compact subset of $X$, for any $0<\varepsilon<1$, there exist $r_1, \ldots, r_N$ in $(0, \varepsilon]$ and $\tilde{u}_1, \ldots, \tilde{u}_N$ in $X$ such that
 \begin{equation}\label{3.5}
\begin{gathered}
\mathcal{A}\subset \bigcup_{i=1}^N B\left(\tilde{u}_i, r_i\right),
\end{gathered}
\end{equation}
where $B(\tilde{u}_i, r_i)$ represents the ball in $X$ of center $\tilde{u}_i$ and radius $r_i$. Without loss of generality, we can assume that for any $i$
 \begin{equation}\label{3.6}
\begin{gathered}
B\left(\tilde{u}_i, r_i\right) \cap \mathcal{A} \neq \emptyset,
\end{gathered}
\end{equation}
otherwise, it can be deleted from the sequence $\tilde{u}_1, \ldots, \tilde{u}_N$. Therefore, we can choose  $u_i, i=1,2, \cdots, N$ such that
 \begin{equation}\label{3.7}
\begin{gathered}
u_i \in B\left(\tilde{u}_i, r_i\right) \cap \mathcal{A},
\end{gathered}
\end{equation}
and
 \begin{equation}\label{3.8}
\begin{gathered}
\mathcal{A} \subset \bigcup_{i=1}^N\left(B\left(u_i, 2 r_i\right) \cap \mathcal{A}\right).
\end{gathered}
\end{equation}
It follows from \eqref{3.2} and \eqref{2.4} that for any  $u\in B\left(u_i, 2 r_i\right) \cap \mathcal{A}$, we have
 \begin{equation}\label{3.9}
\begin{gathered}
\left\|P S\left(t_0\right) u-P S\left(t_0\right) u_i\right\| \leq 2M_1e^{\lambda_0 t_0} r_i,
\end{gathered}
\end{equation}
and
 \begin{equation}\label{3.10}
\begin{gathered}
\left\|(I-P) S\left(t_0\right) u-(I-P) S\left(t_0\right) u_i\right\| \leq 2(M_2e^{\lambda_1 t_0}+M_3e^{\lambda_0 t_0})r_i.
\end{gathered}
\end{equation}

By Lemma \ref{lem3.1}, for any $\alpha>0$, we can find $y_i^1, \ldots, y_i^{n_i}$ such that 
 \begin{equation}\label{3.11}
\begin{gathered}
B_{P X}\left(P S\left(t_0\right) u_i, 2 M_1e^{\lambda_0 t_0} r_i\right) \subset \bigcup_{j=1}^{n_i} B_{PX}\left(y_i^j, \alpha M_1e^{\lambda_0 t_0} r_i\right)
\end{gathered}
\end{equation}
with
 \begin{equation}\label{3.12}
\begin{gathered}
n_i \leq \Lambda 2^\Lambda \left(1+\frac{2}{\alpha}\right)^\Lambda,
\end{gathered}
\end{equation} 
where $\Lambda$ is the dimension of $P X$ and we have denoted by $B_{PX}(y, r)$ the ball in $PX$ of radius $r$ and center $y$.

Set
 \begin{equation}\label{3.13}
\begin{gathered}
u_i^j=y_i^j+(I-P) S\left(t_0\right) u_i
\end{gathered}
\end{equation}
for $i=1, \ldots, N, j=1, \ldots, n_i$. Then, for any $u\in B\left(u_i, 2 r_i\right) \cap \mathcal{A}$,  there exists a $j$ such that
 \begin{equation}\label{3.14}
\begin{aligned}
\left\|S\left(t_0\right) u-u_i^j\right\|
& \leq\left\|P S\left(t_0\right) u-y_i^j\right\|+\left\|(I-P) S\left(t_0\right) u-(I-P) S\left(t_0\right) u_i\right\| \\
& \leq\left(\alpha M_1e^{\lambda_0 t_0}+2M_2e^{\lambda_1 t_0}+2M_3e^{\lambda_0 t_0}\right) r_i
\end{aligned}
\end{equation}
with
 \begin{equation}\label{3.16}
n_i \leq \Lambda(2+\frac{4}{\alpha} )^{\Lambda}
\end{equation}
Denote by $\eta=\alpha M_1e^{\lambda_0 t_0}+2M_2e^{\lambda_1 t_0}+2M_3e^{\lambda_0 t_0}$, then we have
 \begin{equation}\label{3.20}
S\left(t_0\right)\left(B\left(u_i, 2 r_i\right) \cap \mathcal{A}\right) \subset \bigcup_{j=1}^{n_i} B\left(u_i^j, \eta r_i\right).
\end{equation} 
Thanks to the invariance of $\mathcal{A}$, i.e., $\mathcal{A}=S\left(t_0\right) \mathcal{A}$, we have
 \begin{equation}\label{3.21}
\mathcal{A} \subset \bigcup_{i=1}^N \bigcup_{j=1}^{n_i} B\left(u_i^j, \eta r_i\right) .
\end{equation}
This implies that, for any $d \geq 0$,
 \begin{equation}\label{3.22}
\begin{aligned}
\mu_H\left(\mathcal{A}, d, \eta\varepsilon\right)
\leq \sum_{i=1}^N \sum_{j=1}^{n_i} \eta^{d}r_i^d \leq \Lambda(2+\frac{4}{\alpha})^{\Lambda} \eta^{d} \sum_{i=1}^N r_i^d,
\end{aligned}
\end{equation}
we deduce, by taking the infimum over all the coverings of $\mathcal{A}$ by balls of radii less than $\varepsilon$,
 \begin{equation}\label{3.23}
\begin{aligned}
\mu_H\left(\mathcal{A}, d, \eta\varepsilon\right)\leq \Lambda(2+\frac{4}{\alpha})^{\Lambda} \eta^{d}\mu_H(\mathcal{A}, d, \varepsilon).
\end{aligned}
\end{equation}
Applying the formula recursively for $k$ times yields
 \begin{equation}\label{2.27}
\begin{aligned}
\mu_H\left(\mathcal{A}, d, (\eta\varepsilon)^k\right)\leq [\Lambda(2+\frac{4}{\alpha})^{\Lambda} \eta^{d}]^k\mu_H(\mathcal{A}, d, \varepsilon).
\end{aligned}
\end{equation}
Therefore, if
 \begin{equation}\label{3.25}
d<\frac{-\ln\Lambda-\Lambda\ln(2+\frac{4}{\alpha})}{\ln (\alpha M_1e^{\lambda_0 t_0}+2M_2e^{\lambda_1 t_0}+2M_3e^{\lambda_0 t_0})},
\end{equation}
then
 \begin{equation}\label{3.31}
\Lambda(2+\frac{4}{\alpha})^{\Lambda} \eta^{d}<1.
\end{equation}
Thus, by taking $k \rightarrow \infty$, we have $(\eta\varepsilon)^k\rightarrow 0$
and \eqref{2.27} leads to
 \begin{equation}\label{3.32}
\mu_H(\mathcal{A}, d, (\eta\varepsilon)^k) \rightarrow 0.
\end{equation}
This completes the proof.
\end{proof}
\begin{rem}\label{rem2.1} 
By \eqref{3.4}, we can see  estimation of $d_H$ depends on the parameter $\alpha$. It would be convenient to know what is the optimal bound for this dimension. In other words, what is the value of $\alpha$ such that the right  hand side of \eqref{3.4} attains its minimum value. By doing some simulations with some specific values of the parameters involved in \eqref{3.4}, one can check that when $M_1e^{\lambda_0t_0}$ becomes smaller and smaller the value of $\alpha$ at which the minimum is achieved is closer and closer to $2$, and from one threshold on this minimum is achieved when $\alpha=2$. In conclusion, in these particular situations, if we  take $\alpha\uparrow 2$ and assume   $2M_1e^{\lambda_0t_0}+2M_2e^{\lambda_1t_0}+2M_3e^{\lambda_0t_0}<1$, then for all $\alpha\in (0,2)$, we have
$\alpha M_1e^{\lambda_0t_0}+2M_2e^{\lambda_1t_0}+2M_3e^{\lambda_0t_0}<1$  and hence we obtain the estimation
 \begin{equation}\label{3.4w}
d_{H}\leq \frac{-\ln\Lambda-\Lambda\ln 4}{\ln (2e^{\lambda_0 t_0}+2M_2e^{\lambda_1 t_0}+2M_3e^{\lambda_0 t_0})},
\end{equation}
which is independent of $\alpha$. But this works only in those mentioned cases in which $M_1e^{\lambda_0t_0}$ is smaller than some threshold (which depends on all the parameters  involved in the problem. 
\end{rem}

Next, we study the fractal dimension of attractors for the semigroup $\{S(t)\}_{t\geq 0}$  based on the state space decomposition and squeeze property $\mathbf{Hypothesis\  A1}$ following  the idea of \cite{DA97,T14} and \cite{ZM}. Here, we extend the method to Banach spaces. The fractal dimension (or capacity) of $\mathcal{A}$ is defined as
 \begin{equation}\label{3.33}
\operatorname{dim}_f \mathcal{A}=\limsup_{\varepsilon \rightarrow 0} \frac{\ln N_{\varepsilon}(\mathcal{A})}{-\ln \varepsilon},
\end{equation}
where $N_{\varepsilon}(\mathcal{A})$ is the minimum number of balls of radius less than $\varepsilon$ which is
necessary to cover $\mathcal{A}$.

\begin{thm}\label{thm3.2}
Let $\mathcal{A}$ be the global attractor of $\{S(t)\}_{t\geq 0}$ with finite diameter $R_\mathcal{A}$, that is $R_\mathcal{A}:=\sup _{u \in \mathcal{A}}\|u\|_X<\infty$.  Assume that $\mathbf{Hypothesis\  A1}$ holds  and  there exists  $0<\alpha<M_1$  such that $\zeta:=\alpha e^{\lambda_0t_0}+M_2e^{\lambda_1t_0}+M_3e^{\lambda_0t_0}<1$. Then, the fractal dimension of global attractor $\mathcal{A}$  has an upper bound
 \begin{equation}\label{3.34}
\operatorname{dim}_f \mathcal{A}\leq \frac{\ln\Lambda +\Lambda\ln(2+\frac{2 M_1}{\alpha})}{-\ln \zeta}<\infty,
\end{equation}
where $\Lambda$ is the dimension of $PX$ defined by \eqref{2.2} and $M_1, M_2, M_3, \lambda_0$ and $\lambda_1$ are given in $\mathbf{Hypothesis\  A1}$. 
\end{thm}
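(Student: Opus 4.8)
The plan is to run the squeezing-plus-covering scheme of Theorem~\ref{thm3.1}, but now iterating a bound on the \emph{number} of covering balls rather than on the Hausdorff measure. The pivotal fact to establish is a one-step covering estimate: \emph{if $Z\subset\mathcal{A}$ lies in a ball $B(z_0,r)$ with centre $z_0\in\mathcal{A}$, then $S(t_0)Z$ is covered by at most $k:=\Lambda(2+2M_1/\alpha)^{\Lambda}$ balls of radius $\zeta r$ in $X$.} To prove this, observe that for $u\in Z$ one has $\|u-z_0\|\le r$, so \eqref{3.2} gives $\|PS(t_0)u-PS(t_0)z_0\|\le M_1e^{\lambda_0t_0}r$; hence $PS(t_0)Z$ is contained in the ball $B_{PX}(PS(t_0)z_0,M_1e^{\lambda_0t_0}r)$ of the $\Lambda$-dimensional subspace $PX$. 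Since $0<\alpha<M_1$, Lemma~\ref{lem3.1} applies with $r_1=M_1e^{\lambda_0t_0}r>r_2=\alpha e^{\lambda_0t_0}r$ and covers this ball by at most $\Lambda2^{\Lambda}(1+M_1/\alpha)^{\Lambda}=k$ balls $B_{PX}(y_\ell,\alpha e^{\lambda_0t_0}r)$, $\ell=1,\dots,n\le k$. Putting $w_\ell:=y_\ell+(I-P)S(t_0)z_0$, for each $u\in Z$ choose $\ell$ with $\|PS(t_0)u-y_\ell\|\le\alpha e^{\lambda_0t_0}r$; then by \eqref{2.4} and the triangle inequality along the splitting $X=PX\oplus(I-P)X$,
\[
\|S(t_0)u-w_\ell\|\le\|PS(t_0)u-y_\ell\|+\|(I-P)S(t_0)u-(I-P)S(t_0)z_0\|\le(\alpha e^{\lambda_0t_0}+M_2e^{\lambda_1t_0}+M_3e^{\lambda_0t_0})r=\zeta r.
\]
This is the Banach-space substitute for the finite-rank orthogonal-projection squeezing used in Hilbert spaces, and it is exactly where $\mathbf{Hypothesis\ A1}$ and the exponential-dichotomy splitting enter.

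Next I would iterate. Since $\mathcal{A}$ is compact with $R_{\mathcal{A}}=\sup_{u\in\mathcal{A}}\|u\|_X<\infty$, fixing $u_0\in\mathcal{A}$ gives $\mathcal{A}\subset B(u_0,2R_{\mathcal{A}})$, a single ball of radius $2R_{\mathcal{A}}$ centred in $\mathcal{A}$. Applying the one-step estimate together with the invariance $\mathcal{A}=S(t_0)\mathcal{A}$, then selecting inside each resulting ball a point of $\mathcal{A}$ to serve as the new centre (which is where some care with constants is needed, see below), and repeating, one obtains after $n$ steps a covering of $\mathcal{A}$ by at most $k^{n}$ balls whose radii shrink geometrically; arranging the relocation of centres so that the per-step contraction factor is $\zeta$, the radii are $\lesssim\zeta^{n}R_{\mathcal{A}}$, so $N_{\rho_n}(\mathcal{A})\le k^{n}$ along a geometric sequence $\rho_n\asymp\zeta^{n}R_{\mathcal{A}}\downarrow0$ (here $\zeta<1$ is used).

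Finally I would insert this into the definition \eqref{3.33}. Given small $\varepsilon>0$, choosing $n=n(\varepsilon)$ with $\rho_n\le\varepsilon<\rho_{n-1}$ yields $n\le(-\ln\zeta)^{-1}\ln(CR_{\mathcal{A}}/\varepsilon)+1$ for a fixed $C$, whence $\ln N_\varepsilon(\mathcal{A})/(-\ln\varepsilon)\le n\ln k/(-\ln\varepsilon)\to\ln k/(-\ln\zeta)$ as $\varepsilon\to0$. Since $\zeta<1$ this is finite and equals $(\ln\Lambda+\Lambda\ln(2+2M_1/\alpha))/(-\ln\zeta)$, which is \eqref{3.34}.

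The main obstacle I expect is making the one-step estimate close so that the iteration genuinely contracts. Two points must be handled: (i) the radius ratio is dictated by Lemma~\ref{lem3.1}, which is why the hypothesis demands $0<\alpha<M_1$ (to have $r_1>r_2$) and why the constant $\Lambda(2+2M_1/\alpha)^{\Lambda}$ appears; (ii) $\mathbf{Hypothesis\ A1}$ is assumed only on $\mathcal{A}$, so the centres $w_\ell$ generated at each step --- which need not lie in $\mathcal{A}$ --- must be relocated to $\mathcal{A}$ before the estimate can be reapplied, and this relocation must be organised so that the per-step contraction factor stays equal to $\zeta$ and the multiple of $R_{\mathcal{A}}$ remains bounded; this is precisely where the exponent in \eqref{3.34} is pinned down. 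The remaining ingredients --- compactness for the initial finite cover, invariance to propagate it, and the elementary estimate of $n(\varepsilon)$ --- are routine.
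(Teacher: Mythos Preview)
Your proposal is correct and follows the same route as the paper: the one-step squeeze-and-cover estimate via Hypothesis~A1 and Lemma~\ref{lem3.1}, iteration using the invariance $\mathcal{A}=S(t_0)\mathcal{A}$, and passage to the limit in \eqref{3.33}. The only difference is that the paper does not relocate centres into $\mathcal{A}$ at each stage but simply reapplies the one-step estimate with the newly produced centres $u_{k-1}^j$ as comparison points, obtaining radii $\zeta^k R_{\mathcal{A}}$ directly; thus your anticipated obstacle~(ii) does not actually arise in the paper's argument.
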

\begin{proof}
Since $\mathcal{A}$ is a global attractor, then it is compact and hence the number $R_\mathcal{A}$ is well defined. Thus, for any $u_0 \in \mathcal{A}$, we have
 \begin{equation}\label{3.35}
\mathcal{A} \subseteq B\left(u_0, R_\mathcal{A}\right),
\end{equation}
where $B\left(u_0,R_\mathcal{A}\right)$ is the ball with center $u_0$ and radius $R_\mathcal{A}$. For any $u \in \mathcal{A} \cap B\left(u_0, R_\mathcal{A}\right)$,  it follows from $\mathbf{Hypothesis\  A1}$ that
\begin{equation}\label{3.36}
\begin{gathered}
\left\|P S\left(t_0\right) u-P S\left(t_0\right) u_0\right\| \leq M_1e^{\lambda_0t_0}R_\mathcal{A},
\end{gathered}
\end{equation}
and
 \begin{equation}\label{3.37}
\begin{gathered}
\left\|(I-P) S\left(t_0\right) u-(I-P) S\left(t_0\right) u_0\right\| \leq  M_2e^{\lambda_1t_0}+M_3e^{\lambda_0t_0}R_\mathcal{A}.
\end{gathered}
\end{equation}
By Lemma \ref{lem3.1}, we can find $y_0^1, \ldots, y_0^{n_0}$ such that
 \begin{equation}\label{3.38}
\begin{gathered}
B_{P X}\left(P S\left(t_0\right) u_0, e^{\lambda_0t_0 }M_1R_\mathcal{A}\right) \subset \bigcup_{j=1}^{n_0} B_{PX}\left(y_0^j, \alpha e^{\lambda_0t_0}R_\mathcal{A}\right)
\end{gathered}
\end{equation}
with
 \begin{equation}\label{3.39}
\begin{gathered}
n_0\leq \Lambda 2^\Lambda \left(1+\frac{M_1}{\alpha}\right)^\Lambda,
\end{gathered}
\end{equation}
where $\Lambda$ is the dimension of $P X$.
\newline
Set
 \begin{equation}\label{3.40}
\begin{gathered}
u_0^j=y_0^j+(I-P) S\left(t_0\right) u_0
\end{gathered}
\end{equation}
for $ j=1, \ldots, n_0$. Then, for any $u \in \mathcal{A} \cap B\left(u_0, R_\mathcal{A}\right)$, there exists $j$ such that
 \begin{equation}\label{3.41}
\begin{aligned}
\left\|S\left(t_0\right) u-u_0^j\right\|
& \leq\left\|P S\left(t_0\right) u-y_0^j\right\|+\left\|(I-P) S\left(t_0\right) u-(I-P) S\left(t_0\right) u_0\right\| \\
& \leq\left(\alpha e^{\lambda_0 t_0}+M_2e^{\lambda_1 t_0}+M_3e^{\lambda_0 t_0}\right)R_\mathcal{A}.
\end{aligned}
\end{equation}
Since $\mathcal{A}$ is invariant,  i.e., $\mathcal{A}=S\left(t_0\right) \mathcal{A}$, we have
 \begin{equation}\label{3.43}
\begin{aligned}
\mathcal{A} & =S\left(t_0\right)\left(\mathcal{A}\cap B\left(u_0, R_\mathcal{A}\right)\right) & \subseteq \bigcup_{j=1}^{n_0} B\left(u_{0}^j,\left(\alpha e^{\lambda_0t_0}+M_2e^{\lambda_1t_0}+M_3e^{\lambda_0t_0}\right)R_\mathcal{A}\right).
\end{aligned}
\end{equation}
Denote by $\zeta=(\alpha e^{\lambda_0t_0}+M_2e^{\lambda_1t_0}+M_3e^{\lambda_0t_0})$. Applying the formula recursively for $k$ times gives
 \begin{equation}\label{3.44}
\begin{aligned}
\mathcal{A} & =S\left(k t_0 \right)\left(\mathcal{A}\cap B\left(u_0, R_\mathcal{A}\right)\right) & \subseteq \bigcup_{j=1}^{n_0,n_1,\cdots, n_{k-1}} B\left(u_{k-1}^j,\zeta^k R_\mathcal{A}\right),
\end{aligned}
\end{equation}
implying that the minimal number $N_{r_k}\left(\mathcal{A}\right)$ of balls with radius $r_k=\zeta^k R_\mathcal{A}$ covering $\mathcal{A}$ in $X$ satisfies
 \begin{equation}\label{3.44}
\begin{aligned}
N_{r_k}\left(\mathcal{A}\right) \leq n_0 \cdot \ldots \cdot n_{k-1} \leq[\Lambda 2^\Lambda \left(1+
\frac{M_1}{\alpha}\right)^\Lambda]^k.
\end{aligned}
\end{equation}
Since we have assumed that $\zeta <1$, then $r_k\rightarrow 0$ as $k\rightarrow \infty$.
Then it follows from \eqref{3.44} that
 \begin{equation}\label{3.45}
\begin{aligned}
\operatorname{dim}_f \mathcal{A} & =\limsup_{r_k\rightarrow 0} \frac{\ln N_{r_k}(\mathcal{A})}{-\ln r_k}\\
& \leq \limsup_{k\rightarrow \infty} \frac{\ln [\Lambda 2^\Lambda \left(1+\frac{M_1}{\alpha}\right)^\Lambda]^k}{-\ln (\zeta^k R_\mathcal{A})}\\
&=\frac{\ln\Lambda +\Lambda\ln(2+\frac{2 M_1}{\alpha})}{-\ln \zeta}<\infty.
\end{aligned}
\end{equation} 
\end{proof}
\begin{rem}\label{rem2.2} 
As noticed in Remark \ref{rem2.1}, the  estimation of $d_f(\mathcal{A})$ depends on the parameter $\alpha$. The same arguments described there are also valid here and, therefore, in some situations in which $M_1e^{\lambda_0t_0}$ is small enough we can deduce a bound for the fractal dimension which is independent of $\alpha$. Namely, if we assume   $M_1e^{\lambda_0t_0}+M_2e^{\lambda_1t_0}+M_3e^{\lambda_0t_0}<1$, then for all $\alpha\in (0,M_1)$, we have
$\alpha  e^{\lambda_0t_0}+M_2e^{\lambda_1t_0}+ M_3e^{\lambda_0t_0}<1$  and hence we deduce, taking $\alpha\uparrow M_1$, that
 \begin{equation}\label{3.4x}
\operatorname{dim}_f \mathcal{A}\leq \frac{\ln\Lambda +\Lambda\ln 4}{-\ln(M_1e^{\lambda_0t_0}+M_2e^{\lambda_1t_0}+M_3e^{\lambda_0t_0})}<\infty,
\end{equation}
which is independent of $\alpha$. 
\end{rem}
\section{Dimensions of pullback attractors for non-autonomous evolution process}
This section is devoted to the investigation of Hausdorff and fractal dimensions of pullback attractors. We first introduce some definitions and preliminaries about evolution processes, pullback attractors as well as their existence criteria.
\begin{defn}\label{defn4.1}
A family of two-parameter  mappings $\{S(t, s): t, s \in \mathbb{R}, t \geqslant s\}$ acting on $X$ is said to be an evolution process on $X$ if it satisfies
 \begin{equation}\label{4.1}
\begin{aligned}
S(t, \tau) S(\tau, s) & =S(t, s), \quad \forall  t, \tau, s \in \mathbb{R}, \quad t \geqslant \tau \geqslant s, \\
S(s, s) & =\mathrm{Id}_X, \quad \forall s \in \mathbb{R},
\end{aligned}
\end{equation}
where $\mathrm{Id}_X: X \rightarrow X$ represents the identity map on $X$.
\end{defn}

For notation simplicity, we will write $\{S(t, s): t, s \in \mathbb{R}, t \geqslant s\}$ simply as $\{S(t, s)\}$ in the following.
The notion of  a pullback attractor  is closely related to the following definition of a pullback absorbing set.
\begin{defn}\label{defn4.5}
The family $\{\mathcal{B}(t)\}_{t \in \mathbb{R}}$ is said to be  pullback  absorbing with respect to the process $\{S(t, s)\}$ if, for all $t \in \mathbb{R}$ and all $D \subset X$ bounded, there exists $T_D(t)>0$ such that for all $s \geqslant T_D(t)$
 \begin{equation}\label{4.7}
S(t, t-s) D \subset \mathcal{B}(t).
\end{equation}
The absorption is said to be uniform if $T_D(t)$ does not depend on the time variable $t$.
\end{defn}

\begin{defn}\label{defn4.4} Let $\{S(t, s)\}$ be a process on a Banach space $X$. A family of compact sets $\{\mathcal{A}(t)\}_{t \in \mathbb{R}}$ is said to be a  pullback attractor for $S$ if, for all $\tau \in \mathbb{R}$, it satisfies\\
(i) $S(t, \tau) \mathcal{A}(\tau)=\mathcal{A}(t)$ for all $t \geqslant \tau$;\\
(ii) $\lim _{s \rightarrow \infty} \operatorname{dist}(S(t, t-s) D, \mathcal{A}(t))=0$ for all bounded subsets $D$ of $X$.\\
The pullback attractor is said to be uniform if the attraction property is uniform in time, i.e.
$$\lim _{s \rightarrow \infty} \sup _{t \in \mathbb{R}} \operatorname{dist}(S(t, t-s) D, \mathcal{A}(t))=0$$
for all bounded subsets $D \subset X$.
\end{defn}
In the definition, $\operatorname{dist}(A, B)$ is the Hausdorff semidistance between $A$ and $B$, defined as
 \begin{equation}\label{4.6}
\operatorname{dist}(A, B)=\sup _{a \in A} \inf _{b \in B} d(a, b), \text { for } A, B \subseteq X,
\end{equation}
 where $d(a, b)=\|b-a\|_{X}$. 

Indeed, just as in the autonomous case, the existence of compact absorbing sets is the crucial property in order to obtain pullback attractors. For the following result, see Babin and Vishik \cite{2} or Temam \cite{37}.
\begin{lem}\label{lem4.1} Let $\{S(t, \tau)\}$ be a two-parameter process, and suppose $S(t, \tau): X \rightarrow X$ is continuous for all $t \geqslant \tau$. If there exists a family of compact pullback  absorbing sets $\{\mathcal{B}(t)\}_{t \in \mathbb{R}}$, then there exists a pullback attractor $\{\mathcal{A}(t)\}_{t \in \mathbb{R}}$, and $\mathcal{A}(t) \subset \mathcal{B}(t)$ for all $t \in \mathbb{R}$. Furthermore,
$$\mathcal{A}(t)=\overline{\bigcup_{D \subset X} \Lambda_D(t)},$$
where
$$
\Lambda_D(t)=\bigcap_{n \in \mathbb{N}} \overline{\bigcup_{s \geqslant n} S(t, t-s) D}
$$
and $D$ is bounded.
\end{lem}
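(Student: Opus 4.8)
The plan is to carry out the standard $\omega$-limit set construction adapted to the pullback (non-autonomous) framework, precisely as in Babin--Vishik \cite{2} and Temam \cite{37}. First, fix $t\in\mathbb{R}$ and a bounded set $D\subset X$ and examine $\Lambda_D(t)=\bigcap_{n\in\mathbb{N}}\overline{\bigcup_{s\geqslant n}S(t,t-s)D}$. Since $\{\mathcal{B}(t)\}_{t\in\mathbb{R}}$ is pullback absorbing, there is $T_D(t)>0$ such that $S(t,t-s)D\subset\mathcal{B}(t)$ for all $s\geqslant T_D(t)$; hence for $n\geqslant T_D(t)$ the sets $K_n:=\overline{\bigcup_{s\geqslant n}S(t,t-s)D}$ form a decreasing chain of nonempty closed subsets of the compact set $\mathcal{B}(t)$. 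By the finite intersection property, $\Lambda_D(t)=\bigcap_{n\geqslant T_D(t)}K_n$ is nonempty and compact, with $\Lambda_D(t)\subset\mathcal{B}(t)$.

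Second, I would prove that $\Lambda_D(t)$ pullback attracts $D$: if $\operatorname{dist}(S(t,t-s)D,\Lambda_D(t))\not\to 0$ as $s\to\infty$, then there exist $\varepsilon>0$, $s_k\to\infty$ and $x_k\in D$ with $\operatorname{dist}(S(t,t-s_k)x_k,\Lambda_D(t))\geqslant\varepsilon$. For $k$ large the points $S(t,t-s_k)x_k$ lie in the compact set $\mathcal{B}(t)$, so a subsequence converges to some $y$; since for each fixed $n$ all but finitely many of these points lie in $K_n$, the limit $y$ belongs to every $K_n$, hence to $\Lambda_D(t)$, contradicting the choice of $\varepsilon$. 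The only delicate point here is that all the relevant images are confined to the single compact set $\mathcal{B}(t)$, which legitimizes the extraction of a convergent subsequence.

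Third, I would set $\mathcal{A}(t)=\overline{\bigcup_D\Lambda_D(t)}$, the union over all bounded $D\subset X$. Each $\Lambda_D(t)\subset\mathcal{B}(t)$, so $\mathcal{A}(t)$ is a closed subset of the compact set $\mathcal{B}(t)$ and is therefore compact; and it pullback attracts every bounded $D$ because $\Lambda_D(t)\subset\mathcal{A}(t)$ and the Hausdorff semidistance is monotone decreasing in its second argument. For the invariance $S(t,\tau)\mathcal{A}(\tau)=\mathcal{A}(t)$ with $t\geqslant\tau$, the inclusion $\subseteq$ follows from the process identity $S(t,t-s)=S(t,\tau)S(\tau,\tau-s')$, valid for $s\geqslant t-\tau$ with $s'=s-(t-\tau)\to\infty$, together with the continuity of $S(t,\tau)$, which gives $S(t,\tau)\Lambda_D(\tau)\subset\Lambda_D(t)$ and hence $S(t,\tau)\mathcal{A}(\tau)\subseteq\mathcal{A}(t)$. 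For the reverse inclusion $\supseteq$, I take $a\in\Lambda_D(t)$, extract by a diagonal argument $s_k\to\infty$ and $x_k\in D$ with $S(t,t-s_k)x_k\to a$, note that $S(\tau,\tau-s_k')x_k\in\mathcal{B}(\tau)$ for $k$ large, pass to a subsequence converging to some $b\in\Lambda_D(\tau)$, and use the continuity of $S(t,\tau)$ to conclude $a=S(t,\tau)b\in S(t,\tau)\mathcal{A}(\tau)$; since $S(t,\tau)\mathcal{A}(\tau)$ is compact, hence closed, this gives $\mathcal{A}(t)\subseteq S(t,\tau)\mathcal{A}(\tau)$.

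The step I expect to be the main obstacle is the reverse inclusion in the invariance identity. It is exactly there that one must use both the compactness of the absorbing family at the intermediate time $\tau$ (not only at $t$) and the continuity of $S(t,\tau)$, and one must track the shifted time indices $s_k'=s_k-(t-\tau)$ carefully so that the extracted limit $b$ genuinely lands in $\Lambda_D(\tau)$. Everything else reduces to routine point-set topology once one observes that all the orbits in play are trapped inside the compact sets $\mathcal{B}(\cdot)$.
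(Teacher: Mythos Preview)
Your proposal is correct and is precisely the standard $\omega$-limit construction from the cited references. Note, however, that the paper does not supply its own proof of this lemma at all: it states the result and refers the reader to Babin--Vishik \cite{2} and Temam \cite{37}. What you have written is an accurate expansion of the argument those monographs give, so there is nothing to compare beyond observing that you have filled in what the paper deliberately left as a citation.
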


We first study the Hausdorff dimension. Similar to $\mathbf{Hypothesis\  A1}$, we impose the following conditions on $\{S(t,s)\}$.

$\mathbf{Hypothesis\  A2}$ There is a finite dimensional projection $P(t):X\rightarrow PX$ with a finite dimension \begin{equation}\label{2.2a}
\Lambda=\dim\{P(t)X\}
\end{equation}
and there are three positive numbers $M_1, M_2, M_3$ and two constants $\lambda_0$ and $\lambda_1$ such that
\begin{equation}\label{3.2a}
\left\|P(t) S(t,t-s_0)\varphi-P(t)S(t,t-s_0)\psi\right\| \leq M_1e^{\lambda_0 s_0}\left\|\varphi-\psi\right\|
\end{equation}
and
 \begin{equation}\label{3.3a}
\begin{gathered}
\left\|(I-P(t)) S(t,t-s_0)\varphi-(I-P(t)) S(t,t-s_0)\psi\right\|\leq (M_2e^{\lambda_1 s_0}+M_3e^{\lambda_0 s_0})\left\|\varphi-\psi\right\|
\end{gathered}
\end{equation}
for any $t\in \mathbb{R}$ and some $s_0\geq 0$ and $\varphi, \psi$ in $\mathcal{A}(t)$. 

\begin{thm}\label{thm4.1} Assume that $\{S(t,s)\}$ is a continuous evolution process with a pullback attractor $\{\mathcal{A}(t)\}_{t\in \mathbb{R}}$  , $\mathbf{Hypothesis\  A2}$ holds and there exist  $0<\alpha<2$   such that
 \begin{equation}\label{3.4a}
\alpha M_1e^{\lambda_0 s_0}+2M_2e^{\lambda_1 s_0}+2M_3e^{\lambda_0 s_0}<1.
\end{equation}
Then, the Hausdorff dimension of the global attractor $\mathcal{A}(t)$ satisfies
 \begin{equation}\label{3.4b}
d_{H}<\frac{-\ln\Lambda-\Lambda\ln(2+\frac{4}{\alpha})}{\ln (\alpha M_1e^{\lambda_0 s_0}+2M_2e^{\lambda_1 s}+2M_3e^{\lambda_0 s_0})},
\end{equation} 
where $\Lambda$ is the dimension of $P(t)X$ defined  by \eqref{2.2a}  and $M_1, M_2, M_3, \lambda_0$ and $\lambda_1$ are given in $\mathbf{Hypothesis\  A1}$.
\end{thm}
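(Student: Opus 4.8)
The plan is to run the proof of Theorem~\ref{thm3.1} fibrewise, replacing the semigroup invariance $S(t_0)\mathcal{A}=\mathcal{A}$ by the process invariance $S(t,\tau)\mathcal{A}(\tau)=\mathcal{A}(t)$. Fix $t\in\mathbb{R}$ and $0<\varepsilon<1$; since the constants $M_1,M_2,M_3,\lambda_0,\lambda_1$ in $\mathbf{Hypothesis\ A2}$ do not depend on $t$, the bound obtained for $d_H(\mathcal{A}(t))$ will be uniform in $t$. First I would cover the fibre $\mathcal{A}(t-s_0)$ by finitely many balls $B(\tilde u_i,r_i)$ with $r_i\le\varepsilon$, discard those disjoint from it, and re-centre at points $u_i\in\mathcal{A}(t-s_0)$ as in \eqref{3.5}--\eqref{3.8}, so that $\mathcal{A}(t-s_0)\subset\bigcup_i(B(u_i,2r_i)\cap\mathcal{A}(t-s_0))$. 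For $u\in B(u_i,2r_i)\cap\mathcal{A}(t-s_0)$, $\mathbf{Hypothesis\ A2}$ with time parameter $t$ gives $\|P(t)S(t,t-s_0)u-P(t)S(t,t-s_0)u_i\|\le 2M_1e^{\lambda_0 s_0}r_i$ and $\|(I-P(t))S(t,t-s_0)u-(I-P(t))S(t,t-s_0)u_i\|\le 2(M_2e^{\lambda_1 s_0}+M_3e^{\lambda_0 s_0})r_i$. As in \eqref{3.11}--\eqref{3.20}, Lemma~\ref{lem3.1} applied inside the $\Lambda$-dimensional subspace $P(t)X$ then covers $B_{P(t)X}(P(t)S(t,t-s_0)u_i,\,2M_1e^{\lambda_0 s_0}r_i)$ by $n_i\le\Lambda(2+\tfrac{4}{\alpha})^{\Lambda}$ balls of radius $\alpha M_1e^{\lambda_0 s_0}r_i$ centred at points $y_i^j$; setting $u_i^j:=y_i^j+(I-P(t))S(t,t-s_0)u_i$ and $\eta:=\alpha M_1e^{\lambda_0 s_0}+2M_2e^{\lambda_1 s_0}+2M_3e^{\lambda_0 s_0}$, which is $<1$ by \eqref{3.4a}, the triangle inequality yields $S(t,t-s_0)(B(u_i,2r_i)\cap\mathcal{A}(t-s_0))\subset\bigcup_{j=1}^{n_i}B(u_i^j,\eta r_i)$.

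Next I would apply the invariance identity $\mathcal{A}(t)=S(t,t-s_0)\mathcal{A}(t-s_0)$ to conclude $\mathcal{A}(t)\subset\bigcup_{i}\bigcup_{j=1}^{n_i}B(u_i^j,\eta r_i)$; taking the infimum over all coverings of $\mathcal{A}(t-s_0)$ by balls of radius $\le\varepsilon$ gives
\begin{equation}
\mu_H\big(\mathcal{A}(t),d,\eta\varepsilon\big)\le\Lambda\Big(2+\tfrac{4}{\alpha}\Big)^{\Lambda}\eta^{d}\,\mu_H\big(\mathcal{A}(t-s_0),d,\varepsilon\big).
\end{equation}
Carrying out the same construction at the times $t-s_0,t-2s_0,\dots$ and composing the coverings through the fibres $\mathcal{A}(t-s_0),\dots,\mathcal{A}(t-ks_0)$ produces, after $k$ steps,
\begin{equation}
\mu_H\big(\mathcal{A}(t),d,\eta^{k}\varepsilon\big)\le\Big[\Lambda\Big(2+\tfrac{4}{\alpha}\Big)^{\Lambda}\eta^{d}\Big]^{k}\mu_H\big(\mathcal{A}(t-ks_0),d,\varepsilon\big).
\end{equation}

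The step I expect to be the main obstacle is precisely this fibre-crossing. In Theorem~\ref{thm3.1} the recursion closes on the single set $\mathcal{A}$, and plain compactness of $\mathcal{A}$ (making $\mu_H(\mathcal{A},d,\varepsilon)<\infty$) is all one needs; here the right-hand side drags along a moving fibre, so one must know that $\mu_H(\mathcal{A}(\tau),d,\varepsilon)$ stays bounded as $\tau=t-ks_0\to-\infty$, not merely that it is finite for each individual $\tau$. I would obtain this from the pullback absorbing family supplied by Lemma~\ref{lem4.1}, which in the situations of interest (for instance the applications of Section~4) can be taken with uniformly bounded fibres, whence $\sup_{\tau\in\mathbb{R}}\mu_H(\mathcal{A}(\tau),d,\varepsilon)<\infty$ for each fixed $\varepsilon$ and $d$; taking this supremum on both sides of the $k$-fold estimate, the bracketed constant is $<1$ exactly when $d<(-\ln\Lambda-\Lambda\ln(2+\tfrac{4}{\alpha}))/\ln\eta$, so letting $k\to\infty$ (so that $\eta^{k}\varepsilon\to0$, since $\eta<1$) forces $\mu_H(\mathcal{A}(t),d)=0$. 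As $t\in\mathbb{R}$ and every admissible $d$ were arbitrary, this establishes \eqref{3.4b} uniformly in $t$.
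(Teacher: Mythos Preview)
Your proposal follows exactly the paper's own argument: cover $\mathcal{A}(t-s_0)$, push forward by $S(t,t-s_0)$ using Hypothesis~A2 and Lemma~\ref{lem3.1}, invoke the invariance $\mathcal{A}(t)=S(t,t-s_0)\mathcal{A}(t-s_0)$ to cover $\mathcal{A}(t)$, and iterate through the fibres $\mathcal{A}(t-ks_0)$. You are in fact more careful than the paper at the recursion step: the paper simply writes $\mu_H(\mathcal{A}(t),d,\eta\varepsilon)\le[\Lambda(2+\tfrac{4}{\alpha})^{\Lambda}\eta^{d}]^{k}\mu_H(\mathcal{A}(t-ks_0),d,\varepsilon)$ and lets $k\to\infty$ without addressing whether $\mu_H(\mathcal{A}(t-ks_0),d,\varepsilon)$ stays bounded, whereas you correctly flag this and supply the fix via a uniformly bounded absorbing family.
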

\begin{proof}
 Replacing $\mathcal{A}$ and $S(t)$ in the proof of Theorem \ref{thm3.1} by  $\mathcal{A}(t-s_0)$ and $S(t,t-s_0)$ respectively till \eqref{3.8}. For any $t\in \mathbb{R}$ and the $s_0\in \mathbb{R}^+$ in $\mathbf{Hypothesis\  A2}$.  It follows from \eqref{3.2a} and \eqref{3.3a} that for any $u\in B\left(u_i, 2 r_i\right) \cap \mathcal{A}(t-s_0)$, we have
 \begin{equation}\label{3.9a}
\begin{gathered}
\left\|P(t) S(t,t-s_0) u-P(t)  S(t,t-s_0) u_i\right\| \leq 2 M_1e^{\lambda_0 s_0} r_i,
\end{gathered}
\end{equation}
and
 \begin{equation}\label{3.10a}
\begin{gathered}
\left\|(I-P(t) ) S(t,t-s_0) u-(I-P(t) ) S(t,t-s_0) u_i\right\| \leq 2(M_2e^{\lambda_1 s_0}+M_3e^{\lambda_0 s_0})r_i.
\end{gathered}
\end{equation}
By Lemma \ref{lem3.1}, for any $\alpha>0$, we can find $y_i^1, \ldots, y_i^{n_i}$ such that
 \begin{equation}\label{3.11a}
\begin{gathered}
B_{P(t) X}\left(P(t) S(t,t-s_0) u_i, 2 M_1e^{\lambda_0 s_0} r_i\right) \subset \bigcup_{j=1}^{n_i} B_{P(t) X}\left(y_i^j, \alpha M_1e^{\lambda_0 s_0} r_i\right)
\end{gathered}
\end{equation}
with
 \begin{equation}\label{3.12a}
\begin{gathered}
n_i \leq \Lambda 2^\Lambda \left(1+\frac{2}{ \alpha}\right)^\Lambda,
\end{gathered}
\end{equation}
where $\Lambda$ is the dimension of $P(t) X$.
\newline
Set
 \begin{equation}\label{3.13a}
\begin{gathered}
u_i^j=y_i^j+(I-P(t)) S(t,t-s_0) u_i
\end{gathered}
\end{equation}
for $i=1, \ldots, N, j=1, \ldots, n_i$. Then, for any $u\in B\left(u_i, 2 r_i\right) \cap \mathcal{A}(t_0-s)$, there exists $j$ such that
 \begin{equation}\label{3.14a}
\begin{aligned}
\left\|S(t,t-s_0) u-u_i^j\right\|
& \leq\left\|P(t) S(t,t-s_0) u-y_i^j\right\|+\left\|(I-P(t))S(t,t-s_0) u-(I-P(t))S(t,t-s_0) u_i\right\| \\
& \leq\left(\alpha M_1e^{\lambda_0 s_0}+2M_2e^{\lambda_1 s}+2M_3e^{\lambda_0 s_0}\right) r_i
\end{aligned}
\end{equation}
with
 \begin{equation}\label{3.16a}
n_i \leq \Lambda(2+\frac{4}{\alpha})^{\Lambda}.
\end{equation}
Denote by $\eta=(\alpha M_1e^{\lambda_0 s_0}+2M_2e^{\lambda_1 s_0}+2M_3e^{\lambda_0 s_0})$. Then we have
 \begin{equation}\label{3.20a}
S(t,t-s_0)\left(B\left(u_i, 2 r_i\right) \cap \mathcal{A}\right) \subset \bigcup_{j=1}^{n_i} B\left(u_i^j, \eta r_i\right).
\end{equation}
By the invariance property (i) in the definition \ref{defn4.4}, i.e., $\mathcal{A}(t)=S(t,t-s_0) \mathcal{A}(t-s_0)$, we have
 \begin{equation}\label{4.5}
\mathcal{A}(t) \subset \bigcup_{i=1}^N \bigcup_{j=1}^{n_i} B\left(u_i^j, \eta r_i\right) .
\end{equation}
This gives rise to, for any $d \geq 0$,
 \begin{equation}\label{4.6}
\begin{aligned}
\mu_H\left(\mathcal{A}(t), d, \eta\varepsilon\right)
\leq \sum_{i=1}^N \sum_{j=1}^{n_i} \eta^{d}r_i^d \leq \Lambda(2+\frac{4}{\alpha})^{\Lambda} \eta^{d} \sum_{i=1}^N r_i^d,
\end{aligned}
\end{equation}
we deduce by taking the infimum over all the coverings of $\mathcal{A}(t)$ by balls of radii less than $\varepsilon$:
 \begin{equation}\label{4.7}
\begin{aligned}
\mu_H\left(\mathcal{A}(t), d, \eta\varepsilon\right)\leq \Lambda(2+\frac{4}{\alpha})^{\Lambda} \eta^{d}\mu_H(\mathcal{A}(t-s_0), d, \varepsilon).
\end{aligned}
\end{equation}
Applying the formula recursively for $k$ times and it follows from the fact that $\mathcal{A}(t)=S(t,t-s_0)S(t,t-s_0)\cdots S(t,t-ks_0) \mathcal{A}(t-ks_0)$ we have
 \begin{equation}\label{4.8}
\begin{aligned}
\mu_H\left(\mathcal{A}(t), d, \eta\varepsilon\right)\leq [\Lambda(2+\frac{4}{\alpha})^{\Lambda} \eta^{d}]^k\mu_H(\mathcal{A}(t-ks_0), d, \varepsilon).
\end{aligned}
\end{equation}
Therefore, if
 \begin{equation}\label{4.9}
d_H<\frac{-\ln\Lambda-\Lambda\ln(2+\frac{4}{\alpha})}{\ln (\alpha M_1e^{\lambda_0 s_0}+2M_2e^{\lambda_1 s_0}+2M_3e^{\lambda_0 s_0})},
\end{equation}
then
 \begin{equation}\label{4.10}
\Lambda(2+\frac{4}{\alpha})^{\Lambda} \eta^{d}<1,
\end{equation}
and \eqref{4.8} leads to
 \begin{equation}\label{4.11}
\mu_H(\mathcal{A}(t), d, \varepsilon) \rightarrow 0,
\end{equation}
when $k \rightarrow \infty$. This completes the proof. 
\end{proof}
\begin{rem}\label{rem3.1} 
Taking into account the content of Remark \ref{rem2.1} we can obtain some estimation for the dimension which is independent on $\alpha$. Indeed, if we  take $\alpha\uparrow 2$ and assume   $2M_1e^{\lambda_0s_0}+2M_2e^{\lambda_1s_0}+2M_3e^{\lambda_0s_0}<1$, then for all $\alpha\in (0,2)$, we have
$\alpha M_1e^{\lambda_0s_0}+2M_2e^{\lambda_1s_0}+2M_3e^{\lambda_0s_0}<1$  and hence we deduce
 \begin{equation}\label{3.4w}
d_{H}\leq \frac{-\ln\Lambda-\Lambda\ln 4}{\ln (2M_1e^{\lambda_0 s_0}+2M_2e^{\lambda_1 s_0}+2M_3e^{\lambda_0 s_0})},
\end{equation}
which is independent of $\alpha$, although it may only be optimal for small values of $M_1e^{\lambda_0s_0}$. 
\end{rem}

Subsequently, we study the fractal dimension of pullback attractors for the evolution process $\{S(t,s)\}$.
\begin{thm}\label{thm4.2}
Let $\{\mathcal{A}(t)\}_{t\in \mathbb{R}}$ be the pullback attractor  of $\{S(t,s)\}$ with uniform finite diameter $R_{\mathcal{A}}:=\sup _{t\in \mathbb{R}}\sup _{u \in \mathcal{A}(t)}\|u\|_X <\infty$.  Assume that $\mathbf{Hypothesis\  A2}$ holds  and there  exists  $0<\alpha<M_1$  such that $\zeta:=\alpha e^{\lambda_0 s_0}+M_2e^{\lambda_1 s_0}+M_3e^{\lambda_0 s_0}<1$. Then, the fractal dimension of global attractor $\mathcal{A}(t)$  has an upper bound
 \begin{equation}\label{3.34}
\operatorname{dim}_f \mathcal{A}(t)\leq \frac{\ln\Lambda +\Lambda\ln(2+\frac{2 M_1}{\alpha})}{-\ln \zeta}<\infty,
\end{equation}
where $\Lambda$ is the dimension of $PX$ defined by \eqref{2.2} and $M_1, M_2, M_3, \lambda_0$ and $\lambda_1$ are given in $\mathbf{Hypothesis\  A2}$. 
\end{thm}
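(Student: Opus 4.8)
\emph{Proof proposal.} The plan is to transcribe the argument of Theorem \ref{thm3.2} to the two-parameter setting, replacing the semigroup $S(t_0)$ by $S(t,t-s_0)$, the attractor $\mathcal{A}$ by the fibre $\mathcal{A}(t-s_0)$, and the self-invariance $\mathcal{A}=S(t_0)\mathcal{A}$ by the pullback invariance $\mathcal{A}(t)=S(t,t-s_0)\mathcal{A}(t-s_0)$ coming from Definition \ref{defn4.4}(i). The role played by the finite diameter of $\mathcal{A}$ in Theorem \ref{thm3.2} is now taken over by the \emph{uniform} diameter bound $R_{\mathcal{A}}=\sup_{t\in\mathbb{R}}\sup_{u\in\mathcal{A}(t)}\|u\|_X<\infty$.

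First I would fix $t\in\mathbb{R}$, pick $u_0\in\mathcal{A}(t-s_0)$, and note $\mathcal{A}(t-s_0)\subseteq B(u_0,R_{\mathcal{A}})$ by compactness and the uniform bound. Applying $\mathbf{Hypothesis\ A2}$ with $\|u-u_0\|\le R_{\mathcal{A}}$ gives, exactly as in \eqref{3.36}--\eqref{3.37}, the split estimates $\|P(t)S(t,t-s_0)u-P(t)S(t,t-s_0)u_0\|\le M_1e^{\lambda_0 s_0}R_{\mathcal{A}}$ and $\|(I-P(t))S(t,t-s_0)u-(I-P(t))S(t,t-s_0)u_0\|\le (M_2e^{\lambda_1 s_0}+M_3e^{\lambda_0 s_0})R_{\mathcal{A}}$ for all $u\in\mathcal{A}(t-s_0)$. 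Next I would invoke the covering Lemma \ref{lem3.1} on the $\Lambda$-dimensional subspace $P(t)X$ with $r_1=M_1e^{\lambda_0 s_0}R_{\mathcal{A}}$ and $r_2=\alpha e^{\lambda_0 s_0}R_{\mathcal{A}}$ (here $0<\alpha<M_1$ guarantees $r_1>r_2$), obtaining $n_0\le\Lambda 2^\Lambda(1+M_1/\alpha)^\Lambda$ centres $y_0^j$, set $u_0^j=y_0^j+(I-P(t))S(t,t-s_0)u_0$ as in \eqref{3.40}, and combine the two estimates to get $\|S(t,t-s_0)u-u_0^j\|\le\zeta R_{\mathcal{A}}$ for a suitable $j$; the pullback invariance then yields $\mathcal{A}(t)\subseteq\bigcup_{j=1}^{n_0}B(u_0^j,\zeta R_{\mathcal{A}})$.

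The main step is the iteration. I would write $\mathcal{A}(t)=S(t,t-s_0)S(t-s_0,t-2s_0)\cdots S(t-(k-1)s_0,t-ks_0)\mathcal{A}(t-ks_0)$ and repeat the one-step construction on each fibre $\mathcal{A}(t-js_0)$ for $0\le j\le k-1$. The crucial observation — and the only point where the non-autonomous structure really enters — is that the covering number $\Lambda 2^\Lambda(1+M_1/\alpha)^\Lambda$ and the contraction factor $\zeta$ are the \emph{same} at every step, because $R_{\mathcal{A}}$ bounds all fibres uniformly and the constants $\Lambda,M_1,M_2,M_3,\lambda_0,\lambda_1$ in $\mathbf{Hypothesis\ A2}$ do not depend on $t$. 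Hence $\mathcal{A}(t)$ is covered by at most $[\Lambda 2^\Lambda(1+M_1/\alpha)^\Lambda]^k$ balls of radius $r_k=\zeta^k R_{\mathcal{A}}$, and since $\zeta<1$ we have $r_k\to 0$ as $k\to\infty$. Plugging this into the definition \eqref{3.33} of $\operatorname{dim}_f$ gives
$$\operatorname{dim}_f\mathcal{A}(t)\le\limsup_{k\to\infty}\frac{\ln[\Lambda 2^\Lambda(1+M_1/\alpha)^\Lambda]^k}{-\ln(\zeta^k R_{\mathcal{A}})}=\frac{\ln\Lambda+\Lambda\ln(2+\frac{2M_1}{\alpha})}{-\ln\zeta}<\infty,$$
which is the claimed bound.

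I expect the only genuine obstacle to be bookkeeping: ensuring that the backward chain of fibres $\mathcal{A}(t-js_0)$ can all be covered using $t$-independent constants, which is precisely what the uniform diameter hypothesis $R_{\mathcal{A}}<\infty$ and the $t$-uniform constants in $\mathbf{Hypothesis\ A2}$ deliver. Everything else is a verbatim copy of the proof of Theorem \ref{thm3.2}, so no new analytic difficulty arises.
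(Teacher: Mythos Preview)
Your proposal is correct and follows essentially the same route as the paper's own proof: fix $t$, use Hypothesis~A2 to split the image of a ball under $S(t,t-s_0)$ into a finite-dimensional part covered via Lemma~\ref{lem3.1} and a contracted complementary part, then iterate along the chain $S(t,t-s_0)\cdots S(t-(k-1)s_0,t-ks_0)$ using the pullback invariance and the $t$-uniformity of $R_{\mathcal A}$ and the constants in Hypothesis~A2. The only cosmetic difference is that the paper initially places the centre $u_0$ in $\mathcal{A}(t)$ rather than in $\mathcal{A}(t-s_0)$ as you do; your choice is arguably cleaner and changes nothing in the estimates.
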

\begin{proof}
Since $\{\mathcal{A}(t)\}_{t\in \mathbb{R}}$ is a pullback attractor, then it is compact and hence the number $R_\mathcal{A}$ is well defined. Thus, for any $t\in \mathbb{R}$ and $u_0 \in \mathcal{A}(t)$, we have
 \begin{equation}\label{3.35a}
\mathcal{A}(t) \subseteq B\left(u_0, R_{\mathcal{A}}\right),
\end{equation}
where $B\left(u_0,R_\mathcal{A}\right)$ is the ball with center $u_0$ and radius $R_\mathcal{A}$. For any $u \in \mathcal{A}(t-s_0) \cap B\left(u_0, R_\mathcal{A}\right)$, it follows from $\mathbf{Hypothesis\  A2}$ that 
\begin{equation}\label{3.36a}
\begin{gathered}
\left\|P(t) S\left(t,t-s_0\right) u-P(t) S(t,t-s_0) u_0\right\| \leq M_1e^{\lambda_0s_0}R_\mathcal{A},
\end{gathered}
\end{equation}
and
 \begin{equation}\label{3.37a}
\begin{gathered}
\left\|(I-P(t)) S(t,t-s_0) u-(I-P(t)) S(t,t-s_0) u_0\right\| \leq  M_2e^{\lambda_1s_0}+M_3e^{\lambda_0s_0}R_\mathcal{A}.
\end{gathered}
\end{equation}
By Lemma \ref{lem3.1}, we can find $y_0^1, \ldots, y_0^{n_0}$ such that
 \begin{equation}\label{3.38a}
\begin{gathered}
B_{P(t) X}\left(P(t) S(t,t-s_0) u_0, e^{\lambda_0 s_0}M_1R_\mathcal{A}\right) \subset \bigcup_{j=1}^{n_0} B_{P(t)X}\left(y_0^j, \alpha e^{\lambda_0 s_0}R_\mathcal{A}\right)
\end{gathered}
\end{equation}
with
 \begin{equation}\label{3.39a}
\begin{gathered}
n_0\leq \Lambda 2^\Lambda \left(1+\frac{M_1}{\alpha}\right)^\Lambda,
\end{gathered}
\end{equation}
where $\Lambda$ is the dimension of $P(t)X$.
Set
 \begin{equation}\label{3.40a}
\begin{gathered}
u_0^j=y_0^j+(I-P(t)) S(t,t-s_0) u_0
\end{gathered}
\end{equation}
for $ j=1, \ldots, n_0$. Then, any $u \in \mathcal{A}(t-s_0) \cap B\left(u_0, R_\mathcal{A}\right)$, there exists $j$ such that
 \begin{equation}\label{3.41a}
\begin{aligned}
\left\|S(t,t-s_0) u-u_0^j\right\|
& \leq\left\|P(t)S(t,t-s_0) u-y_0^j\right\|+\left\|(I-P(t)) S(t,t-s_0) u-(I-P(t)) S(t,t-s_0) u_0\right\| \\
& \leq\left(\alpha e^{\lambda_0 s_0}+M_2e^{\lambda_1 s_0}+M_3e^{\lambda_0 s_0}\right)R_\mathcal{A}.
\end{aligned}
\end{equation}
Since $\mathcal{A}(t)$ is invariant,  i.e., $\mathcal{A}(t)=S(t,t-s_0) \mathcal{A}(t-s_0)$, we have
 \begin{equation}\label{3.43a}
\begin{aligned}
\mathcal{A}(t) & =S(t,t-s_0)\left(\mathcal{A}(t-s_0)\cap B\left(u_0, R_\mathcal{A}\right)\right) & \subseteq \bigcup_{j=1}^{n_0} B\left(u_{0}^j,\left(\alpha e^{\lambda_0 s_0}+M_2e^{\lambda_1 s_0}+M_3e^{\lambda_0 s_0}\right)R_\mathcal{A}\right).
\end{aligned}
\end{equation}
Denote by $\zeta=(\alpha e^{\lambda_0 s_0}+M_2e^{\lambda_1 s_0}+M_3e^{\lambda_0 s_0})$. Applying the formula recursively for $k$ times,
 \begin{equation}\label{3.44b}
\begin{aligned}
\mathcal{A} (t)& =S\left(t, t-s_0 \right)\cdots S\left(t-(k-1)s_0, t-k s_0 \right)\left(\mathcal{A}(t-ks_0)\cap B\left(u_0, R_\mathcal{A}\right)\right) & \subseteq \bigcup_{j=1}^{n_0,n_1,\cdots, n_{k-1}} B\left(u_{k-1}^j,\zeta^k R_\mathcal{A}\right),
\end{aligned}
\end{equation}
implying that the minimal number $N_{r_k}\left(\mathcal{A} (t)\right)$ of balls with radius $r_k=\zeta^k R_\mathcal{A}$ covering $\mathcal{A} (t)$ in $X$ satisfies
 \begin{equation}\label{3.44a}
\begin{aligned}
N_{r_k}\left(\mathcal{A} (t)\right) \leq n_1 \cdot \ldots \cdot n_k \leq\left[\Lambda 2^\Lambda \left(1+\frac{M_1}{\alpha}\right)^\Lambda\right]^k.
\end{aligned}
\end{equation}
Since we have assumed that $\zeta <1$, then $r_k\rightarrow 0$ as $k\rightarrow \infty$.
Then it follows from \eqref{3.44a} that
 \begin{equation}\label{3.45a}
\begin{aligned}
\operatorname{dim}_f\mathcal{A} (t) & =\limsup_{r_k\rightarrow 0} \frac{\ln N_{r_k}(\mathcal{A} (t))}{-\ln r_k}\\
& \leq \limsup_{k\rightarrow \infty} \frac{\ln [\Lambda 2^\Lambda \left(1+\frac{M_1}{\alpha}\right)^\Lambda]^k}{-\ln \zeta^k R_\mathcal{A}}\\
&=\frac{\ln\Lambda +\Lambda\ln(2+\frac{2M_1}{ \alpha})}{-\ln \zeta}<\infty.
\end{aligned}
\end{equation} 
\end{proof}
\begin{rem}\label{rem3.2} 
Again, at light of Remark \ref{rem2.2}, if we  take $\alpha\uparrow M_1$ and assume   $M_1e^{\lambda_0s_0}+M_2e^{\lambda_1s_0}+M_3e^{\lambda_0s_0}<1$, then for all $\alpha\in (0,M_1)$, we have
$\alpha  e^{\lambda_0s_0}+M_2e^{\lambda_1s_0}+ M_3e^{\lambda_0s_0}<1$  and hence we get an estimation
 \begin{equation}\label{3.4z}
\operatorname{dim}_f \mathcal{A}\leq \frac{\ln\Lambda +\Lambda\ln 4}{-\ln(M_1e^{\lambda_0s_0}+M_2e^{\lambda_1s_0}+M_3e^{\lambda_0s_0})}<\infty,
\end{equation}
which is independent of $\alpha$, although it may be only  optimal for small values of $M_1e^{\lambda_0s_0}$. 
\end{rem}
\section{Applications}
In this section, we are concerned about applications of the above established theoretical results to  the retarded reaction-diffusion equation and the non-autonomous retarded functional differential equations.
\subsection{Retarded reaction-diffusion equation}
This subsection is devoted to the Hausdorff and fractal dimensions of global attractors for an autonomous retarded reaction-diffusion equation on the bounded domain $[0, \pi]$ with a Dirichlet boundary condition.
 \begin{equation}\label{5.1}
 \left\{\begin{array}{l}
\frac{\partial}{\partial t} u(x, t)  =\frac{\partial^2}{\partial x^2} u(x, t)-au(x, t)-bu(x, t-r)+f(u(x, t-r)), 0 \leq x \leq \pi, t \geq 0, \\ u(0, t)  =u(\pi, t)=0, t \geq 0, \\ u(x, t) =\phi(t)(x), 0 \leq x \leq \pi,-r \leq t \leq 0,
\end{array}\right.
\end{equation}
where $a$, $b$ and $r$ are positive constants. Denote by $H=L^2(0, \pi)$ with inner product $(\xi,\eta)=\int_0^\pi \xi(x)\eta(x)dx$, norm $\|\xi\|_H=[\int_0^\pi \xi^2(x)dx]^{1/2}$ for any $\xi, \eta\in H$ and $X=C([-r, 0],H)$ the \textcolor{red}{space of continuous functions} from $[-r, 0]$ to $H$ endowed with the supremum norm $\|\phi\|=\sup_{\theta \in[-r, 0]}\|\phi(\theta)\|_H$ for any $\phi \in X$. In order to set the solution in the abstract semigroup framework, we define $A: H\rightarrow H$ by
 \begin{equation}\label{5.3}
\begin{aligned}
Ay=\ddot{y}
\end{aligned}
\end{equation}
with domain $\operatorname{Dom}\left(A\right)=\left\{y \in C^2([0, \pi]) ; y(0)=y(\pi)=0\right\}$, $L: X\rightarrow H$ by
 \begin{equation}\label{5.2}
\begin{aligned}
L\phi \triangleq -a\phi(0)-b\phi(-r)
\end{aligned}
\end{equation}
for any $\phi\in C$ and
$A_U: X\rightarrow X$ by
 \begin{equation}\label{5.4}
\begin{aligned}
& A_U\phi=A\phi(0)+L\phi\\
\end{aligned}
\end{equation}
for any $\phi\in X$.
It is well known that $A-aI$ generates an analytic compact semigroup $\{T(t)\}_{t \geq 0}$ on $H$ and  \cite{WJ} that $A_U$ generates a semigroup $\{U(t)\}_{t \geq 0}$. Moreover, we assume that $f$ satisfies the following global Lipschitz condition.

$\mathbf{Hypothesis\  A3}$
$\left\|f\left(\phi_1\right)-f\left(\phi_2\right)\right\|_{H} \leq L\left\|\phi_1-\phi_2\right\| \text { for any } \phi_1, \phi_2 \in X.$\\
It follows from \cite{WJ} Theorem 2.6 that \eqref{5.1} admits a global solution $u^\phi(\cdot):[-r, \infty] \rightarrow H$ such that $u^\phi(t)=\phi(t)$ for $t\in [-r,0]$ and
 \begin{equation}\label{5.5}
\begin{aligned}
u^\phi(t)=T(t) \phi(0)+\int_0^t T(t-s)\left[L\left(u_s^\phi\right)+f\left(u_s^\phi\right)\right] d s,
\end{aligned}
\end{equation}
for $t>0$.

Define $\Phi: \mathbb{R}\times X\rightarrow X$ by $\Phi(t)\phi=u^\phi_t(\cdot)$, then it generates an infinite dimensional dynamical system due to the uniqueness of the solution. The existence of attractor for semilinear or nonlinear partial functional differential equations including \eqref{5.1} as special case have been reported in much literature. See, for instance, \cite{YY} tackles the autonomous case with a nondensely defined linear part. Apparently, \eqref{5.1} satisfies other assumptions in \cite{YY} and hence it follows from Proposition 3.1 and Theorem 3.1 in \cite{YY} that \eqref{5.1} admits a global attractor which is stated as follows.
\begin{lem}\label{lem5.1}
Assume that $\mathbf{Hypothesis\  A3}$ holds. Then, for any $\phi \in X$, there exists a constant $\delta>a$ such that the integral solution $u^\phi_t(\cdot)$ of Eq. \eqref{5.1} satisfies the following inequality
 \begin{equation}\label{5.6}
\begin{aligned}
\left\|u_t\right\| \leq \frac{c_1 \mathrm{e}^{\delta r}}{a-L_f \mathrm{e}^{\delta r}}+\mathrm{e}^{\delta r}\left(\|\phi\|-\frac{c_1}{a-L_f \mathrm{e}^{\delta r}}\right) \mathrm{e}^{\left( L_f e^{\delta r}-a\right) t}, \quad t \geq 0,
\end{aligned}
\end{equation}
where $c_1=\|f(\mathbf{0})\|$, $a \neq L_f e ^{\delta r}$. If $a>L_f e^{\delta r}$, then Eq. \eqref{5.1} has a nonempty global attractor $\mathcal{A}$.
\end{lem}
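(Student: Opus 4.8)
The plan is to argue in two stages. First, I would establish the a priori estimate \eqref{5.6} directly from the variation of constants formula \eqref{5.5}. Second, I would use \eqref{5.6}, together with the compactness of the parabolic semigroup $\{T(t)\}_{t\ge 0}$, to produce a compact absorbing set in $X$, so that the existence of the global attractor follows from the classical theory for dissipative semigroups (Hale \cite{HJ}, Babin and Vishik \cite{2}, Temam \cite{37}); this is also exactly the route of Proposition 3.1 and Theorem 3.1 of \cite{YY}, whose remaining structural assumptions (sectoriality of the nondensely defined linear part, continuity and local Lipschitz dependence of the solution, etc.) are routinely verified for \eqref{5.1}.

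For the estimate, I would take the $H$-norm in \eqref{5.5} and insert the exponential decay $\|T(t)\|_{\mathcal{L}(H)}\le e^{-at}$ (valid since the Dirichlet Laplacian is negative definite), the linear bound $\|L\phi\|_{H}\le (a+b)\|\phi\|$ read off from \eqref{5.2}, and Hypothesis A3 in the form $\|f(\phi)\|_{H}\le c_1+L_f\|\phi\|$ with $c_1=\|f(\mathbf{0})\|$. This yields a Gronwall--Halanay type integral inequality in which the memory enters only through the sup-norm $\|u^\phi_s\|=\sup_{\theta\in[-r,0]}\|u^\phi(s+\theta)\|_H$, schematically
\[
\|u^\phi(t)\|_H\le e^{-at}\|\phi(0)\|_H+\int_0^t e^{-a(t-s)}\bigl(c_1+L_f\|u^\phi_s\|\bigr)\,ds ,
\]
the delayed linear contribution $-bu(s-r)$ being of the same type and absorbed into the constant $L_f$ as in \cite{WJ,YY}. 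The decisive device is then to multiply by the weight $e^{\delta t}$ with $\delta>a$: the delayed term becomes genuinely contractive as soon as $L_fe^{\delta r}<a$, and solving the resulting scalar linear inequality and passing from $\|u^\phi(\cdot)\|_H$ on $[t-r,t]$ back to $\|u^\phi_t\|$ reproduces \eqref{5.6}, with $c_1/(a-L_fe^{\delta r})$ as the stationary level. Note that the window ``$\delta>a$ and $L_fe^{\delta r}<a$'' is nonempty precisely when $L_f$ is small relative to $a$ and $r$, which is the standing hypothesis $a>L_fe^{\delta r}$.

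Once \eqref{5.6} holds with $a>L_fe^{\delta r}$, the exponent $L_fe^{\delta r}-a$ is negative, so $\limsup_{t\to\infty}\|u^\phi_t\|\le c_1e^{\delta r}/(a-L_fe^{\delta r})$ and, for $\phi$ in any bounded subset of $X$, the orbit enters the fixed closed ball $\mathcal B=\{\psi\in X:\|\psi\|\le 1+c_1e^{\delta r}/(a-L_fe^{\delta r})\}$ after a finite time; hence $\mathcal B$ is a bounded absorbing set. To upgrade it to a compact one I would invoke the compactness of $T(t)$ for $t>0$: for $t\ge r$ the solution map $\Phi(t)$ sends bounded sets into precompact ones, because in \eqref{5.5} each slice $\{u^\phi(t+\theta):\theta\in[-r,0]\}$ is relatively compact in $H$ by parabolic smoothing, while the family is equicontinuous in $\theta$, so Ascoli--Arzel\`a applies in $C([-r,0],H)$. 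Then $\overline{\Phi(r)\mathcal B}$ is a compact absorbing set and the global attractor $\mathcal A=\bigcap_{t\ge 0}\overline{\bigcup_{s\ge t}\Phi(s)\mathcal B}$ exists and is nonempty.

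The step I expect to be the main obstacle is the bookkeeping in the delay--Gronwall argument needed to recover the precise constants in \eqref{5.6} --- the interplay of the factors $e^{\delta r}$, $e^{-at}$ and $e^{(L_fe^{\delta r}-a)t}$ and the admissible range of $\delta$ --- together with making the asymptotic compactness rigorous in the supremum-norm phase space $C([-r,0],H)$, where no Hilbert structure is available and one must rely entirely on Ascoli--Arzel\`a and the smoothing of the analytic semigroup.
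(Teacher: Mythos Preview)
Your proposal is correct and takes the same route as the paper: the paper does not prove this lemma independently but simply invokes Proposition~3.1 and Theorem~3.1 of \cite{YY}, and your two-stage sketch (weighted Halanay--Gronwall estimate from the variation of constants formula \eqref{5.5}, then a compact absorbing set via parabolic smoothing plus Ascoli--Arzel\`a in $C([-r,0],H)$) is exactly the argument underlying that citation. Your caveat about the bookkeeping of the constants --- in particular how the linear delay term $-bu(\cdot-r)$ is absorbed alongside $L_f$ to reproduce the precise form of \eqref{5.6} --- is well placed, since the statement as quoted from \cite{YY} uses a single effective Lipschitz constant for the full delayed right-hand side.
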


We now estimate the dimensions of the global attractor  in Lemma \ref{lem5.1}. We first introduce the following \textcolor{red}{state decomposition results} of the linear part $A_U$ of \eqref{5.4} established in \cite{WJ}. It follows from  \cite{WJ} that the characteristic values of the linear part  $A_U$ are the roots of the following characteristic equation
 \begin{equation}\label{5.7}
\begin{aligned}
n^2 -\left(\lambda+a+b e^{-\lambda r}\right) =0, n=1,2, \cdots.
\end{aligned}
\end{equation}
Since $A_U$ is compact, it follows from Theorem 1.2 (i) in \cite{WJ} that the spectrum of $A_U$  are point spectra, which we denote by $\varrho_1>\varrho_2>\cdots$ with multiplicity $n_1, n_2,\cdots$, where $\varrho_1$ is defined as
 \begin{equation}\label{5.8}
\begin{aligned}
\varrho_1=\max \left\{\operatorname{Re} \lambda: n^2 -\left(\lambda+a+b e^{-\lambda r}\right) =0\right\}, n=1,2, \cdots.
\end{aligned}
\end{equation}
In the following, we always assume that $b-a<1$ and it follows from Lemma 1.13 on P73 in \cite{WJ} that if $a>0$, $b>0$ and $b-a<1$, then $\varrho_1<0$. For any given $\varrho_m<0$, $m\geq 1$, there is a
 \begin{equation}\label{5.8b}
\begin{aligned}
k_m=n_1+n_2+\cdots+n_m
\end{aligned}
\end{equation}
dimensional  subspace $X^U_{k_m}$ such that
$$X=X^U_{k_m} \bigoplus X^S_{k_m}$$
is the decomposition of $X$ by $\varrho_m$. Let $P_{k_m}$ and $Q_{k_m}$ be the projection of $X$ onto $X^U_{k_m}$ and $ X^S_{k_m}$ respectively, that is $X^U_{k_m}=P_{k_m}X$, $X^S_{k_m}=(I-P_{k_m})X=Q_{k_m}X$. It follows from the definition of $P_{k_m}$ and $Q_{k_m}$ that
\begin{equation}\label{5.8a}
\begin{aligned}
\left\|U(t)Q_{k_m} x\right\| & \leq K e^{\varrho_m t}\|x\|, & & t \geq0,
\end{aligned}
\end{equation}
where $K$ is a positive constant.

To show the squeezing property, we extend the domain of $U(t)$ to the following space of some discontinuous functions
 \begin{equation}\label{5.9}
\begin{aligned}
\hat{C}=\left\{\phi:[-r, 0] \rightarrow X ;\left.\phi\right\|_{[-r, 0)} \quad \text {is continuous and } \lim _{\theta \rightarrow 0^{-}} \phi(\theta) \in X \quad \text{exists} \right\}
\end{aligned}
\end{equation}
and introduce the following informal variation of  constant formula established in \cite{WJ}
 \begin{equation}\label{5.10}
\begin{aligned}
u(t) & =U(t) \phi+\int_0^t\left[U(t-s) X_0 f(u_s)\right](0) d s, \quad t \geq 0.
\end{aligned}
\end{equation}
It is proved by Theorem 2.1. in \cite{WJ} that the function $u:[-r, \infty) \rightarrow X$ defined by \eqref{5.5} satisfies \eqref{5.10} with $u_0=\phi\in X$, where $X_0:[-r, 0] \rightarrow B(X, X)$ is given by $X_0(\theta)=0$ if $-r \leq \theta<0$ and $X_0(0)=I d$.

\begin{rem}
In general, the solution semigroup defined by \eqref{5.10} have no definition at discontinuous functions and the integral in the formula is undefined as an integral in the phase space. However,  if interpreted correctly, \eqref{5.10} does make sense. Details can be found in \cite{CM} Pages 144 and 145.
\end{rem}

We can now prove the squeezing property from which it is clear that the global attractors of \eqref{5.1} have finite Hausdorff and fractal dimensions by Theorems \ref{thm3.1} and \ref{thm3.2}.
\begin{thm}\label{thm5.1}Let $P$ be  the finite dimension projection $P_{k_m}$ defined by \eqref{5.8a}, $\varrho_{1}, \varrho_{m}$  and $K$ being defined in \eqref{5.8} and \eqref{5.8a} respectively, then we have
 \begin{equation}\label{5.12}
\left\|P \Phi(t)\varphi-P \Phi(t)\psi\right\| \leq 2e^{(L_f+\varrho_1)t}\left\|\varphi-\psi\right\|
\end{equation}
and
 \begin{equation}\label{5.13}
\begin{gathered}
\left\|(I-P) \Phi(t)\varphi-(I-P) \Phi(t)\psi\right\|\leq (Ke^{\varrho_m t}+\frac{KL_f }{\varrho_1+L_f-\varrho_m} e^{(L_f+\varrho_1)t})\left\|\varphi-\psi\right\|
\end{gathered}
\end{equation}
for any $t\geq 0$ and $\varphi, \psi\in\mathcal{A}$.
\end{thm}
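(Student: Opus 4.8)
The plan is to estimate the difference of two trajectories by a Gronwall argument built on the variation-of-constants formula \eqref{5.10} and the exponential dichotomy \eqref{5.8a} of $U(t)$ associated with the splitting $X=X^U_{k_m}\oplus X^S_{k_m}$. Fix $\varphi,\psi\in\mathcal{A}$, let $u^\varphi,u^\psi$ be the corresponding solutions, and put $w(t)=u^\varphi(t)-u^\psi(t)$, $w_t=u^\varphi_t-u^\psi_t$, and $g(s)=f(u^\varphi_s)-f(u^\psi_s)$, so that $\|g(s)\|_H\le L_f\|w_s\|$ by $\mathbf{Hypothesis\ A3}$. Subtracting the two copies of \eqref{5.10} gives
$$w_t=U(t)(\varphi-\psi)+\int_0^t\big[U(t-s)X_0\,g(s)\big](0)\,ds,$$
and, since $P:=P_{k_m}$ and $Q:=I-P$ commute with $U(t)$ and extend to the space $\hat C$ of \eqref{5.9}, this identity splits into a $PX$-part and a $QX$-part that I will estimate separately.

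For the preliminary step I would bound $\|w_t\|$ globally: using that the (eventually compact) semigroup $U(t)$ has growth bound $\varrho_1$, so $\|U(t)\phi\|\le e^{\varrho_1 t}\|\phi\|$ after the usual renorming, with $\varrho_1<0$ guaranteed by $a,b>0$ and $b-a<1$ via Lemma~1.13 of \cite{WJ}, together with the analogous estimate for the fundamental-solution term $\big[U(\cdot)X_0v\big](0)$, one gets
$$\|w_t\|\le e^{\varrho_1 t}\|\varphi-\psi\|+\int_0^t e^{\varrho_1(t-s)}L_f\|w_s\|\,ds,$$
whence Gronwall yields the a priori bound $\|w_t\|\le e^{(L_f+\varrho_1)t}\|\varphi-\psi\|$. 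Substituting this back into the two components of the split formula and using \eqref{5.8a} on $QX$ (and the finite-dimensional forward bound $\|U(t)Px\|\le e^{\varrho_1 t}\|x\|$ on $PX$, with the matching bounds for the $X_0$-terms), the $QX$-part produces the elementary integral $\int_0^t Ke^{\varrho_m(t-s)}L_f e^{(L_f+\varrho_1)s}\,ds = \frac{KL_f}{\varrho_1+L_f-\varrho_m}\big(e^{(L_f+\varrho_1)t}-e^{\varrho_m t}\big)$, whose denominator is positive because $\varrho_1>\varrho_m$; this yields \eqref{5.13}. The $PX$-part collapses, after absorbing the projection norm and the initial term into a single constant, to the bound $2e^{(L_f+\varrho_1)t}\|\varphi-\psi\|$ of \eqref{5.12}.

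I expect the main obstacle to be not the Gronwall bookkeeping but making the argument rigorous in the extended phase space: the ``kernel'' $X_0$ in \eqref{5.10} is a discontinuous, delta-like element of $\hat C$, so one has to justify that $P$ and $Q$ act on it, that $\big[U(\cdot)X_0v\big](0)$ is well defined, and---crucially---that the dichotomy estimate \eqref{5.8a} transfers to this fundamental-solution term with the same exponents $\varrho_m$ and $\varrho_1$; this is exactly the point where the correct interpretation of \eqref{5.10} (see \cite{CM} and \cite{WJ}) is needed. A secondary technical point is the possible presence of generalized eigenvectors on the finite-dimensional block $PX$, which would introduce polynomial factors in the forward estimate; these must either be shown to be absent for \eqref{5.1} or absorbed at the price of an arbitrarily small enlargement of the exponent. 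Once these are handled, identifying $M_1=2$, $\lambda_0=L_f+\varrho_1$, $M_2=K$, $\lambda_1=\varrho_m$, $M_3=\frac{KL_f}{\varrho_1+L_f-\varrho_m}$ puts the conclusion in the form required by $\mathbf{Hypothesis\ A1}$, so that Theorems~\ref{thm3.1} and \ref{thm3.2} apply.
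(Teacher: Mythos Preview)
Your plan is correct and follows the same overall strategy as the paper: subtract the two copies of the variation-of-constants formula \eqref{5.10}, use the Lipschitz bound $\|g(s)\|\le L_f\|w_s\|$, the dichotomy estimate \eqref{5.8a}, and Gronwall. The only organisational difference is that you first run Gronwall on the \emph{full} difference to obtain the a~priori bound $\|w_t\|\le e^{(L_f+\varrho_1)t}\|\varphi-\psi\|$, and then feed this into the $P$- and $Q$-components and integrate the elementary exponential convolutions directly; the paper instead projects first and then applies Gronwall separately to $\|Pw_t\|$ and to $\|(I-P)w_t\|$. Your ordering is in fact the cleaner one, because the nonlinear increment $g(s)$ is controlled by $L_f\|w_s\|$ and not by $L_f\|(I-P)w_s\|$ or $L_f\|Pw_s\|$, so a Gronwall loop on a single projected component cannot be closed without first controlling the other; your two-step bootstrap sidesteps this issue. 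The technical caveats you flag (extension of $P,Q$ to $\hat C$ and the transfer of the dichotomy bounds to the $X_0$-kernel) are exactly the ones the paper glosses over by appeal to \cite{CM,WJ}, and the final identification of $M_1,M_2,M_3,\lambda_0,\lambda_1$ matches. Note, incidentally, that the paper's own computation for \eqref{5.12} ends with the constant $|\varrho_m|/|\varrho_{m+1}|$ rather than the $2$ in the statement; your route gives a constant that depends only on the norm of $P$ acting on $X_0$, which is what one expects.
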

\begin{proof}
For any $\varphi, \psi\in X$, denote by $y=\varphi-\psi$ and $w_t=\Phi(t)\varphi-\Phi(t)\psi=u^\varphi_t-u^\psi_t$. Then it follows from \eqref{5.10} that
 \begin{equation}\label{5.10a}
\begin{aligned}
w_t& =U(t) y+\int_0^t U(t-s) X_0 [f(u^\varphi_s)-f(u^\psi_s)]d s, \quad t \geq 0.
\end{aligned}
\end{equation}
Taking projection $I-P$ on both sides of \eqref{5.10a} leads to
\begin{equation}\label{5.14}
\begin{aligned}
\|(I-P)w_t\|_X=&\|(I-P)U(t)y+\int_0^t (I-P)U(t-s) X_0 [f(u^\varphi_s)-f(u^\psi_s)]d s\|\\
\leq & K e^{\varrho_m t}\|y\| +L_f \int_0^t  e^{\varrho_1(t-s)}\|(I-P)w_s\|d s.
\end{aligned}
\end{equation}
Multiplying both sides of \eqref{5.14} by $e^{-\varrho_1 t}$,
\begin{equation}\label{5.15}
\begin{aligned}
e^{-\varrho_1 t}\|(I-P)w_t\| \leq & Ke^{(\varrho_m-\varrho_1) t}\|y\| +L_f \int_0^t  e^{-\varrho_1s}\|(I-P)w_t\|d s.
\end{aligned}
\end{equation}
By applying the Gronwall inequality, we have
\begin{equation}\label{5.16}
\begin{aligned}
e^{-\varrho_1 t}\|(I-P)w_t\| \leq & \|y\|[Ke^{(\varrho_m-\varrho_1) t} +\frac{KL_f }{\varrho_m-\varrho_1-L_f}(e^{(\varrho_m-\varrho_1) t}-e^{L_f})],
\end{aligned}
\end{equation}
indicating that
\begin{equation}\label{5.17}
\begin{aligned}
\|(I-P)w_t\| \leq & \|y\|[Ke^{\varrho_m t} +\frac{KL_f }{\varrho_m-\varrho_1-L_f}(e^{\varrho_m  t}-e^{(L_f+\varrho_1)t})]\\
 \leq & \|y\|[Ke^{\varrho_m t} +\frac{KL_f }{\varrho_1+L_f-\varrho_m}e^{(L_f+\varrho_1)t}].
\end{aligned}
\end{equation}
Hence, the second part holds with $\lambda_0=L_f+\varrho_1$, $\lambda_1=\varrho_m$, $M_2=K$ and $M_3=\frac{KL_f }{-\varrho_m+\varrho_1+L_f}$.

Subsequently, we prove the first part. Since $U(t)y=PU(t)y+(I-P)U(t)y$, we have
\begin{equation}\label{5.18}
\begin{aligned}
\|PU(t)y\|\leq &\|U(t)y\|+\|(I-P)U(t)y\|.
\end{aligned}
\end{equation}
\textcolor{red}{Taking projection of $P$ on both sides of \eqref{5.10a} gives
\begin{equation}\label{5.19}
\begin{aligned}
\|Pw_t\|\leq &\|PU(t)y\|+\int_0^t \|PU(t-s) X_0 [f(u^\varphi_s)-f(u^\psi_s)]d s\|\\
\leq & \frac{\|P\|}{\|I-P\|}\|(I-P)U(t)y\| \|U(t)y\|+L_f \int_0^t  e^{\varrho_1(t-s)}\|Pw_t\|d s\\
\leq &  \frac{|\varrho_{m}|}{|\varrho_{m+1}|}e^{\varrho_1t}\|y\|+L_f \int_0^t  e^{\varrho_1(t-s)}\|Pw_t\|d s.
\end{aligned}
\end{equation}
Multiplying both sides of \eqref{5.19} by $e^{-\varrho_1 t}$ implies
\begin{equation}\label{5.20}
\begin{aligned}
e^{-\varrho_1 t}\|Pw_t\| \leq &  \frac{|\varrho_{m}|}{|\varrho_{m+1}|}\|y\| +L_f \int_0^t  e^{-\varrho_1s}\|Pw_t\|d s.
\end{aligned}
\end{equation}
By applying the Gronwall inequality, we have
\begin{equation}\label{5.21}
\begin{aligned}
e^{-\varrho_1 t}\|Pw_t\| \leq &  \frac{|\varrho_{m}|}{|\varrho_{m+1}|}\|y\|e^{L_f t},
\end{aligned}
\end{equation}
which means that
\begin{equation}\label{5.22}
\begin{aligned}
\|Pw_t\| \leq &  \frac{|\varrho_{m}|}{|\varrho_{m+1}|}\|y\|e^{(L_f+\varrho_1) t}.
\end{aligned}
\end{equation}
Hence, the first part holds by taking $M_1= \frac{|\varrho_{m}|}{|\varrho_{m+1}|}$ and $\lambda_0=L_f+\varrho_1$.}
\end{proof}

It follows from Theorem \ref{thm3.1} that we have the following results about dimension of attractor $\{\mathcal{A}(t)\}_{t\in \mathbb{R}}$ for \eqref{5.1}.
\begin{thm}\label{thm5.2}\textcolor{red} {Let $k_m, \varrho_{1}, \varrho_{m}$ and $K$ be  defined in \eqref{5.8}, \eqref{5.8b} and \eqref{5.8a} respectively,  $P$ be  the finite dimensional projection $P_{k_m}$ defined by \eqref{5.8a}. Assume that conditions of Lemma \ref{lem5.1} are satisfied. Moreover, assume there exist  $0<\alpha<2$ and $t_0>0$ such that
 \begin{equation}\label{3.4g}
\alpha \frac{|\varrho_{m}|}{|\varrho_{m+1}|}e^{(L_f+\varrho_1) t_0}+2Ke^{\varrho_m t_0}+2\frac{KL_f }{\varrho_1+L_f-\varrho_m}e^{(L_f+\varrho_1) t_0}<1.
\end{equation}
Then, the Hausdorff dimension of the global attractor $\mathcal{A}$ satisfies
 \begin{equation}\label{3.4h}
d_{H}<\frac{-\ln k_m-k_m\ln(2+\frac{4}{\alpha})}{\ln (\alpha \frac{|\varrho_{m}|}{|\varrho_{m+1}|}e^{(L_f+\varrho_1) t_0}+2Ke^{\varrho_m t_0}+2\frac{KL_f }{\varrho_1+L_f-\varrho_m}e^{(L_f+\varrho_1) t_0})}.
\end{equation}}
\end{thm}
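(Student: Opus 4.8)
The plan is to read Theorem~\ref{thm5.2} as nothing more than the specialization of the abstract Theorem~\ref{thm3.1} to the solution semigroup $\Phi(t)$ of \eqref{5.1}: once $\mathbf{Hypothesis\  A1}$ is verified for $\Phi$ with explicit constants coming from the squeezing estimates of Theorem~\ref{thm5.1}, the asserted bound \eqref{3.4h} is obtained simply by substituting those constants into \eqref{3.4}. So the proof splits into three steps: (i) confirm the standing hypotheses of Theorem~\ref{thm3.1}; (ii) verify $\mathbf{Hypothesis\  A1}$; (iii) substitute and conclude.

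For (i), I would invoke Lemma~\ref{lem5.1}: under its assumptions (in particular $\mathbf{Hypothesis\  A3}$ and $a>L_fe^{\delta r}$, together with the standing conditions $a,b>0$, $b-a<1$ used for the state decomposition), equation \eqref{5.1} generates a semigroup $\Phi(t)$ on $X=C([-r,0],H)$ with a nonempty global attractor $\mathcal{A}$, and $\Phi(t)$ is continuous on $X$ by the well-posedness results of \cite{WJ,YY}. This provides the requirement ``$\{S(t)\}_{t\ge0}$ continuous semigroup with global attractor'' of Theorem~\ref{thm3.1} with $S(t)=\Phi(t)$. For (ii), I would take the finite-dimensional spectral projection $P:=P_{k_m}$ onto $X^U_{k_m}$, so that $\Lambda=\dim PX=k_m=n_1+\cdots+n_m$, and fix an arbitrary $t_0>0$. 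By Theorem~\ref{thm5.1} — more precisely, by the estimate \eqref{5.22} for the $P$-component and \eqref{5.17} for the $(I-P)$-component — for all $\varphi,\psi\in\mathcal{A}$,
\begin{equation*}
\left\|P\Phi(t_0)\varphi-P\Phi(t_0)\psi\right\|\le \frac{|\varrho_m|}{|\varrho_{m+1}|}\,e^{(L_f+\varrho_1)t_0}\left\|\varphi-\psi\right\|,
\end{equation*}
\begin{equation*}
\left\|(I-P)\Phi(t_0)\varphi-(I-P)\Phi(t_0)\psi\right\|\le\left(Ke^{\varrho_m t_0}+\frac{KL_f}{\varrho_1+L_f-\varrho_m}\,e^{(L_f+\varrho_1)t_0}\right)\left\|\varphi-\psi\right\|.
\end{equation*}
Hence \eqref{3.2} and \eqref{2.4} hold with $M_1=|\varrho_m|/|\varrho_{m+1}|$, $M_2=K$, $M_3=KL_f/(\varrho_1+L_f-\varrho_m)$, $\lambda_0=L_f+\varrho_1$, $\lambda_1=\varrho_m$; note $\varrho_1\ge\varrho_m$ yields $\varrho_1+L_f-\varrho_m\ge L_f>0$, so $M_3$ is well defined and positive, and $\mathbf{Hypothesis\  A1}$ is satisfied.

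For (iii), with these identifications the smallness condition \eqref{3.4a} of Theorem~\ref{thm3.1}, namely $\alpha M_1e^{\lambda_0 t_0}+2M_2e^{\lambda_1 t_0}+2M_3e^{\lambda_0 t_0}<1$, is exactly the assumed inequality \eqref{3.4g}; thus Theorem~\ref{thm3.1} applies and gives $d_H<(-\ln\Lambda-\Lambda\ln(2+\tfrac4\alpha))/\ln(\alpha M_1e^{\lambda_0 t_0}+2M_2e^{\lambda_1 t_0}+2M_3e^{\lambda_0 t_0})$, and inserting $\Lambda=k_m$ and the constants above produces precisely \eqref{3.4h}. The step requiring the most care is merely the bookkeeping of constants: one must use the sharper bound $M_1=|\varrho_m|/|\varrho_{m+1}|$ extracted in \eqref{5.22} rather than the cruder constant $2$ written in the statement \eqref{5.12}, and one should remark that \eqref{3.4g} is realizable by some $t_0$ only when the relevant exponents are negative, i.e.\ $\varrho_m<0$ (guaranteed here by $a,b>0$, $b-a<1$) and $L_f+\varrho_1<0$ — in that regime choosing $t_0$ large drives the left-hand side of \eqref{3.4g} below $1$ for every $\alpha\in(0,2)$. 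Since Theorem~\ref{thm5.2} only \emph{assumes} \eqref{3.4g}, no existence argument for $t_0$ is actually needed inside the proof, though it is worth flagging for the reader.
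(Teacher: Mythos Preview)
Your proposal is correct and follows exactly the paper's approach: the paper does not give a separate proof of Theorem~\ref{thm5.2} but simply states that it follows from Theorem~\ref{thm3.1} once the squeezing estimates of Theorem~\ref{thm5.1} are in hand, which is precisely your steps (i)--(iii). Your observation that one must read off $M_1=|\varrho_m|/|\varrho_{m+1}|$ from \eqref{5.22} rather than the constant~$2$ in the displayed statement \eqref{5.12} is exactly the bookkeeping needed to match \eqref{3.4g}--\eqref{3.4h}, and your remark on the realizability of \eqref{3.4g} is a helpful addition not spelled out in the paper.
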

\begin{rem}\label{rem5.2}
Since $\varrho_1$ and $\varrho_m$ represent the first and the $m$-th eigenvalues of the linear part $A_U$ of Eq. \eqref{5.1}, which depends on the delay effect, we can see the Hausdorff dimension of global attractor $\mathcal{A}$ of Eq. \eqref{5.1} depends on the time delay via the distribution of eigenvalues of the linear part $A_U$ of Eq. \eqref{5.1}. Furthermore, it follows from \eqref{3.4h} that the Hausdorff dimension depends on the constants of exponential dichotomy, the Lipschitz constant of the nonlinear term and the spectrum gap of the linear part $A_U$, indicating that the Hausdorff dimension of global attractor $\mathcal{A}$ is very flexible to be tuned by a variety of parameters. \textcolor{red}{If we take $\alpha\uparrow 2$ and impose conditions as Remark \ref{rem2.1}, we can derive an estimation independent of  $\alpha$.}
\end{rem}

Particularly, we can see from \eqref{3.4h} that the Hausdorff dimension of global attractor $\mathcal{A}$ is monotone increasing with respect to  $m$. Thus, in the case $m=1$, we can obtain the minimum  Hausdorff dimension of global attractor $\mathcal{A}$, which is given in the following corollary.

\begin{cor}\label{cor5.1}
\textcolor{red}{ Let $\varrho_{1}$ and $K$ be  defined by  \eqref{5.8} and \eqref{5.8a} respectively and   $P$ be  the finite dimensional projection $P_{k_1}$ defined by \eqref{5.8a}. Assume that the conditions of Theorem \ref{thm5.1} are satisfied $a,b,r$ are appropriately chosen such that  $k_1=1$. Moreover, assume there exist  $0<\alpha<2$ and $t_0>0$ such that
 \begin{equation}\label{3.4ab}
\alpha \frac{|\varrho_{1}|}{|\varrho_{2}|}e^{(L_f+\varrho_1) t_0}+2Ke^{\varrho_1 t_0}+2Ke^{(L_f+\varrho_1) t_0}<1.
\end{equation}
Then, the Hausdorff dimension of global attractor $\mathcal{A}$ satisfies
 \begin{equation}\label{3.4ac}
d<\frac{-\ln (2+\frac{4}{\alpha})}{\ln (\alpha \frac{|\varrho_{1}|}{|\varrho_{2}|}e^{(L_f+\varrho_1) t_0}+2Ke^{\varrho_1 t_0}+2Ke^{(L_f+\varrho_1) t_0})}.
\end{equation}}
\end{cor}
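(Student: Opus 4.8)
\textbf{Proof proposal for Corollary \ref{cor5.1}.}
The plan is to recognize this as a direct specialization of Theorem \ref{thm5.2} to the case $m=1$, together with the extra structural assumption that the parameters $a,b,r$ are chosen so that the first eigenvalue $\varrho_1$ is simple, i.e. $n_1=1$ and hence $k_1=n_1=1$. First I would invoke Theorem \ref{thm5.1} (whose hypotheses are assumed), which gives the squeezing estimates \eqref{5.12} and \eqref{5.13} with the explicit constants $M_1=|\varrho_m|/|\varrho_{m+1}|$, $M_2=K$, $M_3=KL_f/(\varrho_1+L_f-\varrho_m)$, $\lambda_0=L_f+\varrho_1$ and $\lambda_1=\varrho_m$; specializing to $m=1$ turns these into $M_1=|\varrho_1|/|\varrho_2|$, $M_2=K$, and $M_3=KL_f/(\varrho_1+L_f-\varrho_1)=KL_f/L_f=K$, which is exactly why the two $K$-terms in \eqref{3.4ab} and \eqref{3.4ac} coincide.

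Next I would observe that these estimates verify $\mathbf{Hypothesis\ A1}$ for the semigroup $\Phi(t)$ on its attractor $\mathcal{A}$, with $t_0$ the time provided in the statement, $P=P_{k_1}$ the projection from \eqref{5.8a}, and $\Lambda=\dim P X=k_1=1$. The smallness condition \eqref{3.4ab} is then precisely \eqref{3.4a} of Theorem \ref{thm3.1} after substituting the above constants (with $\lambda_1=\varrho_1$ in the $m=1$ case). Hence Theorem \ref{thm3.1} applies and yields
\begin{equation*}
d_H<\frac{-\ln\Lambda-\Lambda\ln(2+\frac{4}{\alpha})}{\ln(\alpha M_1 e^{\lambda_0 t_0}+2M_2 e^{\lambda_1 t_0}+2M_3 e^{\lambda_0 t_0})}.
\end{equation*}
Plugging in $\Lambda=1$ (so $\ln\Lambda=0$), $M_1=|\varrho_1|/|\varrho_2|$, $M_2=M_3=K$, $\lambda_0=L_f+\varrho_1$, $\lambda_1=\varrho_1$ gives exactly \eqref{3.4ac}, completing the proof.

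The only genuinely non-routine point is checking that the hypothesis ``$a,b,r$ appropriately chosen such that $k_1=1$'' is consistent — that is, that the top eigenvalue of the characteristic equation \eqref{5.7} can indeed be made simple. For $n=1$ the relevant root is the rightmost solution of $1-(\lambda+a+be^{-\lambda r})=0$; under $b-a<1$ one has $\varrho_1<0$, and since for each fixed $n$ the dominant root of $n^2-(\lambda+a+be^{-\lambda r})=0$ is real and the values $n^2$ are strictly increasing in $n$, the maximum over $n$ in \eqref{5.8} is attained uniquely at $n=1$ for generic parameter choices, giving $n_1=1$ and $k_1=1$. I expect this eigenvalue-multiplicity bookkeeping to be the main (mild) obstacle; everything else is substitution into Theorems \ref{thm5.1} and \ref{thm3.1}.
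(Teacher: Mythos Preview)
Your proposal is correct and matches the paper's approach: the corollary is presented there without a separate proof, simply as the $m=1$ specialization of Theorem~\ref{thm5.2}, and your substitution $M_3=KL_f/(\varrho_1+L_f-\varrho_1)=K$ together with $\Lambda=k_1=1$ (hence $\ln\Lambda=0$) is exactly the computation that collapses \eqref{3.4h} to \eqref{3.4ac}. Your discussion of why $k_1=1$ is attainable goes beyond what the paper provides (it merely assumes $a,b,r$ are ``appropriately chosen''), so that paragraph is a bonus rather than a gap.
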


By Theorem \ref{thm3.2}, we have the following results about the fractal dimension of Eq. \eqref{5.1}.

\begin{thm}\label{thm5.3}
\textcolor{red}{Let $k_m, \varrho_{1}, \varrho_{m},\gamma$ and $K$ be  defined in \eqref{5.8}, \eqref{5.8b} and \eqref{5.8a} respectively,  $P$ be  the finite dimensional projection $P_{k_m}$ defined by \eqref{5.8a}. Assume that the conditions of Lemma \ref{lem5.1} are satisfied. Moreover, assume there exist  $0<\alpha<\frac{|\varrho_{m}|}{|\varrho_{m+1}|}$ and $t_0>0$  such that $\zeta:=\alpha e^{(L_f+\varrho_1)t_0}+Ke^{\varrho_mt_0}+\frac{KL_f }{\varrho_1+L_f-\varrho_m}e^{(L_f+\varrho_1)t_0}<1$, then, the fractal dimension of global attractor $\mathcal{A}$  has an upper bound
 \begin{equation}\label{3.34f}
\operatorname{dim}_f \mathcal{A}\leq \frac{\ln k_m +k_m \ln(2+\frac{2|\varrho_{m}|}{\alpha |\varrho_{m+1}|})}{-\ln \zeta}<\infty.
\end{equation}}
\end{thm}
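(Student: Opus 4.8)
The plan is to deduce Theorem~\ref{thm5.3} directly from the abstract fractal-dimension criterion of Theorem~\ref{thm3.2} by verifying that $\mathbf{Hypothesis\ A1}$ holds for the semigroup $\Phi(t)$ generated by \eqref{5.1}, restricted to its global attractor $\mathcal{A}$. First I would invoke Theorem~\ref{thm5.1}, which provides precisely the squeezing estimates \eqref{5.12}--\eqref{5.13} for all $t\geq 0$ and all $\varphi,\psi\in\mathcal{A}$; taking $t=t_0$ these read as \eqref{3.2} and \eqref{2.4} with the identifications
\begin{equation}\label{idf}
M_1=\frac{|\varrho_{m}|}{|\varrho_{m+1}|},\quad M_2=K,\quad M_3=\frac{KL_f}{\varrho_1+L_f-\varrho_m},\quad \lambda_0=L_f+\varrho_1,\quad \lambda_1=\varrho_m,
\end{equation}
and with the finite-dimensional projection $P=P_{k_m}$ of \eqref{5.8a}, whose range $PX=X^U_{k_m}$ has dimension $\Lambda=k_m$ by \eqref{5.8b}. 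Thus $\mathbf{Hypothesis\ A1}$ is satisfied.

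Next I would check the remaining quantitative hypotheses of Theorem~\ref{thm3.2}. The global attractor $\mathcal{A}$ from Lemma~\ref{lem5.1} is compact, so it has a finite diameter $R_{\mathcal{A}}=\sup_{u\in\mathcal{A}}\|u\|_X<\infty$. The standing assumption $0<\alpha<\frac{|\varrho_{m}|}{|\varrho_{m+1}|}=M_1$ is exactly the condition $0<\alpha<M_1$ of Theorem~\ref{thm3.2}, and the hypothesis $\zeta:=\alpha e^{(L_f+\varrho_1)t_0}+Ke^{\varrho_m t_0}+\frac{KL_f}{\varrho_1+L_f-\varrho_m}e^{(L_f+\varrho_1)t_0}<1$ is precisely $\zeta=\alpha e^{\lambda_0 t_0}+M_2 e^{\lambda_1 t_0}+M_3 e^{\lambda_0 t_0}<1$ under \eqref{idf}. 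Hence all hypotheses of Theorem~\ref{thm3.2} are in force.

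Finally, Theorem~\ref{thm3.2} yields
\begin{equation}\label{dimfcompute}
\operatorname{dim}_f\mathcal{A}\leq\frac{\ln\Lambda+\Lambda\ln\bigl(2+\tfrac{2M_1}{\alpha}\bigr)}{-\ln\zeta}<\infty,
\end{equation}
and substituting $\Lambda=k_m$ and $M_1=\frac{|\varrho_{m}|}{|\varrho_{m+1}|}$ gives exactly \eqref{3.34f}. The argument is essentially a bookkeeping exercise: the only genuine content was already established in Theorem~\ref{thm5.1} (the squeezing property via exponential dichotomy and Gronwall) and in Theorem~\ref{thm3.2} (the abstract covering estimate). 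The one point that warrants a word of care is that the squeezing estimates of Theorem~\ref{thm5.1} hold for \emph{all} $t\geq 0$, so one must simply fix the particular value $t=t_0$ furnished by the hypothesis and confirm that the constant $M_3=\frac{KL_f}{\varrho_1+L_f-\varrho_m}$ is positive, which holds because $\varrho_1>\varrho_m$ forces $\varrho_1+L_f-\varrho_m>0$; no further obstacle arises.
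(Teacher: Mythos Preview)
Your proposal is correct and follows exactly the route the paper takes: the paper states Theorem~\ref{thm5.3} immediately after the sentence ``By Theorem~\ref{thm3.2}, we have the following results about the fractal dimension of Eq.~\eqref{5.1}'' and gives no further argument, so the entire content is precisely the parameter identification you carry out via Theorem~\ref{thm5.1} (using the constant $M_1=\frac{|\varrho_m|}{|\varrho_{m+1}|}$ from \eqref{5.22}) and the direct substitution into the bound \eqref{3.34}.
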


Similar to Remark \ref{rem5.2} and Corollary \ref{cor5.1}, we have the following corollary about the  fractal dimension of global attractor $\mathcal{A}$ in the case $\varrho_m=\varrho_1$.

\begin{cor}\label{cor5.1}
\textcolor{red}{ Let $\varrho_{1}$ and $K$ be  defined in \eqref{5.8}, \eqref{5.8b} and \eqref{5.8a} respectively and   $P$ be  the finite dimensional projection $P_{k_1}$  defined by \eqref{5.8a}. Assume that the conditions of Lemma \ref{lem5.1} are satisfied and $a,b,r$ are appropriately chosen such that  $k_1=1$. Moreover, assume  there exists $0<\alpha<t_0$ such that $\alpha e^{(L_f+\varrho_1)t_0}+Ke^{\varrho_1t_0}+K e^{(L_f+\varrho_1)t_0}<1$,
then, the fractal dimension of  global attractor $\mathcal{A}$ satisfies
 \begin{equation}\label{3.34g}
\operatorname{dim}_f \mathcal{A}\leq \frac{\ln(2+\frac{2 |\varrho_{1}|}{\alpha|\varrho_{2}|})}{-\ln [(\alpha+K) e^{(L_f+\varrho_1)t_0}+Ke^{\varrho_1t_0}]}<\infty.
\end{equation}}
\end{cor}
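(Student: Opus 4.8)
\emph{Proof strategy.} The plan is to obtain this corollary as the $m=1$ instance of Theorem \ref{thm5.3}, which in turn is the specialization of the abstract fractal-dimension bound in Theorem \ref{thm3.2} to Eq.~\eqref{5.1}; in particular no new estimate is required and the argument only tracks how the constants furnished by Theorem \ref{thm5.1} simplify when $m=1$.

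First I would recall that Theorem \ref{thm5.1} verifies $\mathbf{Hypothesis\ A1}$ for the semigroup $\Phi(t)$ with the spectral projection $P=P_{k_m}$, exponents $\lambda_0=L_f+\varrho_1$ and $\lambda_1=\varrho_m$, and constants $M_1=|\varrho_m|/|\varrho_{m+1}|$, $M_2=K$, $M_3=KL_f/(\varrho_1+L_f-\varrho_m)$. Setting $m=1$ forces $\varrho_m=\varrho_1$, so that $\lambda_1=\varrho_1$ and the denominator $\varrho_1+L_f-\varrho_m$ collapses to $L_f$, whence $M_3=KL_f/L_f=K$; moreover $M_1=|\varrho_1|/|\varrho_2|$. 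The hypothesis that $a,b,r$ are chosen so that $k_1=1$ says that $P_{k_1}X$ is one-dimensional, i.e.\ $\Lambda=k_1=1$, so $\ln\Lambda=0$ and the exponent $\Lambda$ appearing in the covering count of Lemma \ref{lem3.1} equals $1$. Since $\mathcal{A}$ is the global attractor of Lemma \ref{lem5.1} it is compact, hence has finite diameter $R_{\mathcal{A}}$, which is all Theorem \ref{thm3.2} needs besides $\mathbf{Hypothesis\ A1}$.

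Next I would substitute these values into Theorem \ref{thm3.2}. Its squeezing constant becomes
\[
\zeta=\alpha e^{\lambda_0 t_0}+M_2 e^{\lambda_1 t_0}+M_3 e^{\lambda_0 t_0}=\alpha e^{(L_f+\varrho_1)t_0}+Ke^{\varrho_1 t_0}+Ke^{(L_f+\varrho_1)t_0}=(\alpha+K)e^{(L_f+\varrho_1)t_0}+Ke^{\varrho_1 t_0},
\]
and the standing assumption of the corollary is exactly $\zeta<1$, so the recursive covering procedure of Theorem \ref{thm3.2} (applying Lemma \ref{lem3.1} with $F=P_{k_1}X$) carries over verbatim. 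Feeding $\Lambda=1$ and $M_1=|\varrho_1|/|\varrho_2|$ into the output bound $\operatorname{dim}_f\mathcal{A}\le\bigl(\ln\Lambda+\Lambda\ln(2+2M_1/\alpha)\bigr)/(-\ln\zeta)$ collapses the numerator to $\ln\!\bigl(2+2|\varrho_1|/(\alpha|\varrho_2|)\bigr)$ and the denominator to $-\ln\!\bigl[(\alpha+K)e^{(L_f+\varrho_1)t_0}+Ke^{\varrho_1 t_0}\bigr]$, which is precisely \eqref{3.34g}.

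I do not anticipate any real obstacle: the content is entirely bookkeeping of constants. The only two points worth a line of care are (i) confirming that the cancellation $\varrho_1+L_f-\varrho_m=L_f$ is legitimate, i.e.\ that one genuinely is in the regime $m=1$ (so $\varrho_m=\varrho_1$) rather than merely assuming $k_1=1$ for some larger $m$, and (ii) reading the admissible range of $\alpha$ consistently --- Theorem \ref{thm3.2} requires $0<\alpha<M_1=|\varrho_1|/|\varrho_2|$, so the restriction written ``$0<\alpha<t_0$'' in the statement should be understood as $0<\alpha<|\varrho_1|/|\varrho_2|$, with the additional requirement $\zeta<1$ kept in force.
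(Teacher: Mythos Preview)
Your proposal is correct and follows exactly the route the paper intends: the corollary is stated without proof as the $m=1$ specialization of Theorem~\ref{thm5.3}, and your bookkeeping of the constants ($\varrho_m=\varrho_1$, $M_3=K$, $\Lambda=k_1=1$, $M_1=|\varrho_1|/|\varrho_2|$) is precisely what is needed. Your observation that the condition ``$0<\alpha<t_0$'' is a typo for $0<\alpha<|\varrho_1|/|\varrho_2|$ is also well taken.
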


\subsection{Non-autonomous retarded functional differential  equation}
Consider the following typical non-autonomous RFDE arising in real world applications
 \begin{equation}\label{4.1}
\begin{aligned}
\dot{u}(t)=\sum_{k=1}^N A_k u\left(t-\omega_k\right)+\int_{-r}^0 A(t, \theta) u(t+\theta) d \theta+f(u_t), t\geq \sigma,\\
x_\sigma=\phi,
\end{aligned}
\end{equation}
where $0 \leq \omega_1<\omega_2<\cdots<\omega_N \leq r$, $u(t)\in \mathbb{R}^n$, $A_k\in \mathbb{R}^{n\times n}$,  $r$ stands for the delay.  The initial condition $\phi\in X\triangleq C([-r,0],\mathbb{R}^n)$, with $X$ being the Banach space of continuous functions from $[-r,0]$ to $\mathbb{R}^n$ equipped with the supremum norm  $\|\phi\|_{X}=\sup_{\theta \in[-r, 0]}|\phi(\theta)|$ for any $\phi \in X$ and $|\cdot|$ is the usual norm of $\mathbb{R}^n$. $A(t, \theta)$ is integrable in $\theta$ for each $t$ and there is a function $a \in \mathcal{L}_1^{\mathrm{loc}}((-\infty, \infty), \mathbb{R})$ such that
 \begin{equation}\label{3.1a}
\begin{aligned}
\left|\int_{-r}^0 A(t, \theta) \phi(\theta) d \theta\right| \leq a(t)|\phi|
\end{aligned}
\end{equation}
for all $t \geq \sigma$ and $\phi \in X$. $f$ is a  continuous nonlinear mapping from $X$ into $\mathbb{R}^n$

For notation simplicity, define the linear part of \eqref{4.1} as a  linear  mapping $L(t)$ from $X$ into $\mathbb{R}^n$ given by
 \begin{equation}\label{3.1b}
\begin{aligned}
L(t)\varphi =\sum_{k=1}^N A_k \varphi\left(-\omega_k\right)+\int_{-r}^0 A(t, \theta)\varphi(\theta) d \theta
\end{aligned}
\end{equation}
for any $\varphi\in X$. Following Chapter 6 in \cite{JH}, assume that there is an $n \times n$ matrix function $\eta(t, \theta)$, measurable in $(t, \theta) \in \mathbb{R} \times \mathbb{R}$, normalized so that
$$
\eta(t, \theta)=0 \quad \text { for } \quad \theta \geq 0, \quad \eta(t, \theta)=\eta(t,-r) \quad \text { for } \quad \theta \leq-r,
$$
$\eta(t, \theta)$ is continuous from the left in $\theta$ on $(-r, 0)$ and has bounded variation in $\theta$ on $[-r, 0]$ for each $t$. Further, there is an $m \in \mathcal{L}_1^{\text {loc }}((-\infty, \infty), \mathbb{R})$ such that
$$
\operatorname{Var}_{[-r, 0]} \eta(t, \cdot) \leq m(t)
$$
and the linear mapping $L(t): X \rightarrow \mathbb{R}^n$ is given by
$$
L(t) \varphi=\int_{-r}^0 d[\eta(t, \theta)] \varphi(\theta)
$$
for all $t \in(-\infty, \infty)$ and $\phi \in X$. Obviously, the norm of $L(t)$ satisfies $|L(t) \phi| \leq m(t)|\phi|$. It follows from Theorem 1.1 of Chapter 6 in \cite{JH} that under the above assumptions the following non-autonomous linear equation
 \begin{equation}\label{3.3}
\begin{aligned}
\dot{\tilde{u}}(t)=L(t) \tilde{u}_t,\\
\tilde{u}_\sigma=\phi
\end{aligned}
\end{equation}
admits a unique global solution $\tilde{u}^\phi(\cdot,\sigma): [\sigma-r, \infty)\rightarrow \mathbb{R}^n$ and hence the two parameters process $S(t,\sigma)$ on $X$ defined  by $S(t,\sigma)\phi=\tilde{u}_t^\phi(\cdot,\sigma)$ is a  continuous process. We always assume the trivial equilibrium $\tilde{u}=0$ of \eqref{3.3} is uniformly asymptotically stable and hence by Lemma 5.3 of Chapter 6 in \cite{JH},  there exist positive constants $\gamma$ and $K_{0}$ such that
\begin{equation}\label{4.35}\|S(t,\sigma)\phi\|<K_{0} \mathrm{e}^{-\gamma(t-\sigma)}\|\phi\|\end{equation}
 for all $t \geq \sigma$.

We  also assume the  nonlinear term $f$ satisfies $\mathbf{Hypothesis \  A3}$ and impose the following assumption similar to the exponential dichotomy on the process  $S(t,\sigma)$ generated by \eqref{3.3}.

$\mathbf{Hypothesis\  A4}$ There exist a positive constants $K$ and a negative constant $\beta<-\gamma$, and an $m$ dimensional projection operator $P(t): X \rightarrow X_m, s \in \mathbb{R}$ and $Q(s)=I-P(t):X \rightarrow X_m^\bot, s \in \mathbb{R}$ such that
\begin{equation}\label{4.36}
\begin{aligned}
\|Q(t)S(t, s)\|=\|S(t, s)Q(s)\| \leq K e^{\beta(t-s)}, \quad t \geq s
\end{aligned}
\end{equation}

\begin{rem}\label{rem4.1}
The $\mathbf{Hypothesis\  A4}$ is an intermediate and standard assumption in the study of dynamics of non-autonomous RFDEs, such as the boundary problem, the existence of almost periodic solutions \cite{JH} and invariant manifolds \cite{NW04}. In the autonomous case, it degenerates to a fact that the phase space $X$ can be decomposed into a finite dimensional unstable subspace and an infinite dimensional stable subspace, which is guaranteed by imposing some conditions on spectrum distribution of the linear operator. The details can be found in Chapter 7 of  \cite{JH} and similar results for retarded reaction diffusion equations in  Subsection 4.1. For non-autonomous operators, the spectrum condition under which  $\mathbf{Hypothesis\  A4}$ holds have been studied in the Appendix A of the very recent work \cite{BLM}.
\end{rem}

It follows from Theorem 1.2 of Chapter 6 in \cite{JH} and some standard contraction techniques, that under  assumption $\mathbf{Hypothesis \  A3}$, the non-autonomous nonlinear equation \eqref{4.1} admits a solution $u^\phi(t,\sigma)$ for any $t\in [\sigma-r, \infty)$, which is also continuous with respect to the initial condition. Define the non-autonomous evolution process  generated by \eqref{3.1} by $\Phi(t,\sigma)\phi=u^\phi_t(\cdot, \sigma)$ for any $\phi\in X$, which is continuous for any $t\geq \sigma$. In the following, we construct exponential attractors for of $\Phi(t,\sigma)$. We first show that $\Phi(t,\sigma)$ admits a family of positive invariant pullback absorbing sets $\mathcal{B}(\sigma)$ for any $\sigma\in \mathbb{R}$.

\begin{thm}\label{thm4.4}
Assume that $\mathbf{Hypothesis \  A4}$ as well as $\mathbf{Hypothesis \  A3}$ hold, $K_0<1$ and $K_0L_f-\gamma<0$. Then the dynamical system $\Phi$ admits an invariant pullback absorbing set $\mathcal{B}$ defined by
\begin{equation}\label{3.12}
\mathcal{B}=\{\phi \in C| \|\phi\|\leq \frac{1}{1-K_0}[\frac{K_0 f(0)}{\gamma}+\frac{1}{\gamma-K_0L_f}]\}.
\end{equation}
\end{thm}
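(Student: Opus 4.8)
The plan is to obtain a uniform‑in‑time a priori estimate on the solution segments $u^{\phi}_{t}(\cdot,\sigma)$ of \eqref{4.1} from a variation‑of‑constants representation, and then to read off from it both the pullback (indeed uniform) absorption and the positive invariance of the ball $\mathcal{B}$. First I would record, in analogy with the informal variation‑of‑constants formula \eqref{5.10} (here legitimate by Theorem~1.2 of Chapter~6 in \cite{JH} together with a contraction argument and the fundamental‑solution device), the identity
\begin{equation*}
u^{\phi}_{t}(\cdot,\sigma)=S(t,\sigma)\phi+\int_{\sigma}^{t}S(t,s)X_{0}\,f\!\bigl(u^{\phi}_{s}(\cdot,\sigma)\bigr)\,ds,\qquad t\ge\sigma .
\end{equation*}
Combining the uniform exponential stability \eqref{4.35}, $\|S(t,s)\|\le K_{0}e^{-\gamma(t-s)}$, with $\mathbf{Hypothesis\ A3}$ written as $|f(\varphi)|\le|f(0)|+L_{f}\|\varphi\|$, this yields the scalar integral inequality
\begin{equation*}
\|u^{\phi}_{t}\|\le K_{0}e^{-\gamma(t-\sigma)}\|\phi\|+\frac{K_{0}|f(0)|}{\gamma}\bigl(1-e^{-\gamma(t-\sigma)}\bigr)+K_{0}L_{f}\int_{\sigma}^{t}e^{-\gamma(t-s)}\|u^{\phi}_{s}\|\,ds .
\end{equation*}

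The step I expect to be the main obstacle is turning this into a closed bound in spite of the \emph{delay}: the unknown enters the integral as the segment norm $\|u^{\phi}_{s}\|=\sup_{\theta\in[-r,0]}|u^{\phi}(s+\theta)|$, not as a pointwise value, so ordinary Gronwall does not apply directly. I would introduce the sliding supremum $N(t)=\sup_{\sigma-r\le\xi\le t}\|u^{\phi}_{\xi}\|$ (noting $\|u^{\phi}_{\xi}\|\le\|\phi\|$ for $\xi\le\sigma$), bound the integral by $\tfrac{K_{0}L_{f}}{\gamma}N(t)$, and exploit that $\mathbf{Hypothesis\ A4}$ forces $\gamma>0$ while the hypothesis $K_{0}L_{f}-\gamma<0$ makes $K_{0}L_{f}/\gamma<1$; a monotonicity/continuation argument (equivalently, a Halanay‑type inequality) then delivers a bound of the form $\|u^{\phi}_{t}\|\le K_{0}e^{-\gamma(t-\sigma)}\|\phi\|+C$ valid for all $t\ge\sigma$, with $C$ exactly the constant that multiplies $\tfrac{1}{1-K_{0}}$ in the definition of $\mathcal{B}$ once one insists, using $K_{0}<1$, that the estimate reproduce the radius of $\mathcal{B}$ rather than a strictly larger number. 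Keeping the constants sharp enough for this self‑improvement to close is the delicate part; the rest is routine bookkeeping.

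Finally I would deduce the two required properties. For pullback absorption, fix $t\in\mathbb{R}$ and a bounded $D\subset X$ with $\sup_{\phi\in D}\|\phi\|\le\rho$; setting $\sigma=t-s$ in the bound above, the term $K_{0}e^{-\gamma s}\rho$ tends to $0$ as $s\to\infty$, so there is $T_{D}>0$ (independent of $t$, since $K_{0},\gamma,L_{f},|f(0)|$ do not depend on $t$ or $\sigma$, whence the absorption is uniform) such that $\Phi(t,t-s)D\subset\mathcal{B}$ for all $s\ge T_{D}$. For positive invariance $\Phi(t,\sigma)\mathcal{B}\subseteq\mathcal{B}$, take $\phi\in\mathcal{B}$, so $\|\phi\|\le\rho_{0}:=\tfrac{1}{1-K_{0}}\bigl[\tfrac{K_{0}|f(0)|}{\gamma}+\tfrac{1}{\gamma-K_{0}L_{f}}\bigr]$, and run a continuation argument: if $t^{*}$ were the first time the segment norm reaches $\rho_{0}$, substituting $\|u^{\phi}_{s}\|\le\rho_{0}$ for $\sigma\le s\le t^{*}$ into the integral inequality and using $K_{0}<1$ would contradict $\|u^{\phi}_{t^{*}}\|=\rho_{0}$; hence $u^{\phi}_{t}(\cdot,\sigma)\in\mathcal{B}$ for all $t\ge\sigma$. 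Together with the obvious boundedness of $\mathcal{B}$, this yields the claim.
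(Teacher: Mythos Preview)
Your overall architecture---variation of constants, then an a~priori bound, then pullback absorption and positive invariance---is exactly the paper's. The difference is that you manufacture an obstacle that is not there. Because the variation-of-constants formula is written at the \emph{segment} level in $X$, taking the $X$-norm yields an inequality for the scalar function $g(\tau):=\|u^{\phi}_{\tau}(\cdot,\sigma)\|$ in which the integral term on the right involves the \emph{same} function $g$:
\[
g(\tau)\le K_{0}e^{-\gamma(\tau-\sigma)}\|\phi\|+\frac{K_{0}|f(0)|}{\gamma}\bigl(1-e^{-\gamma(\tau-\sigma)}\bigr)+K_{0}L_{f}\int_{\sigma}^{\tau}e^{-\gamma(\tau-s)}g(s)\,ds.
\]
There is no mismatch between a pointwise norm on the left and a segment norm under the integral; hence ordinary Gronwall applies \emph{directly}. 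The paper simply multiplies through by $e^{\gamma(\tau-\sigma)}$ and applies the standard Gronwall lemma to obtain
\[
\|u^{\phi}_{t}(\cdot,t-s)\|\le K_{0}\|\phi\|\,e^{(K_{0}L_{f}-\gamma)s}+\frac{K_{0}|f(0)|}{\gamma}+\frac{1}{\gamma-K_{0}L_{f}},
\]
from which absorption (by letting $s\to\infty$) and invariance (by inserting $\|\phi\|\le\rho_{0}$ and using $K_{0}<1$) are read off in one line each. Your sliding-supremum/Halanay detour would also reach a bound, but it is strictly more work, and the part you flag as ``delicate'' (getting the constants to close) disappears entirely in the direct approach, which produces the precise constant $\frac{K_{0}|f(0)|}{\gamma}+\frac{1}{\gamma-K_{0}L_{f}}$ without any self-improvement step.

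For the invariance, your first-exit-time continuation argument is a legitimate alternative to the paper's direct substitution, though again it is more elaborate than necessary: once the Gronwall bound above is in hand, plugging $\|\phi\|\le\rho_{0}$ gives $\|u^{\phi}_{t}\|\le K_{0}\rho_{0}+\bigl[\tfrac{K_{0}|f(0)|}{\gamma}+\tfrac{1}{\gamma-K_{0}L_{f}}\bigr]=K_{0}\rho_{0}+(1-K_{0})\rho_{0}=\rho_{0}$, and you are done.
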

\begin{proof}
By the following informal variation of  constant formula established in \cite{CM}
 \begin{equation}\label{3.10}
\begin{aligned}
u^\phi_t(\cdot, t-s) & =S(t,t-s) \phi+\int_{t-s}^t S(t, t-s-\rho) X_0 f(u^\phi_\rho(\cdot, t-\rho)) d \rho, \quad t \geq 0,
\end{aligned}
\end{equation}
where $X_0:[-r, 0] \rightarrow B(X, X)$ is given by $X_0(\theta)=0$ if $-r \leq \theta<0$ and $X_0(0)=I d$, we have
\begin{equation}\label{3.13}
\begin{aligned}
\left\|u^\phi_t(\cdot, t-s)\right\|\leq &\left\|\left(S(t,t-s)\phi\right)\right\|+\|\int_{t-s}^t S(t, t-(s-\rho)) X_0 f(u^\phi_\rho(\cdot, t-\rho)) d \rho\| \\
\leq & K_0e^{-\gamma s}\left\|\phi\right\|+K_0L_f\int_{t-s}^t e^{-\gamma (s-\rho) }(\|u^\phi_\rho(\cdot, t-\rho)\|+f(0)) \mathrm{d} \rho\\
\leq & K_0e^{-\gamma s}\left\|\phi\right\|+K_0L_f\int_{t-s}^t e^{-\gamma (s-\rho)}\|u^\phi_\rho(\cdot, t-\rho)\| \mathrm{d} \rho+\frac{K_0f(0)(1-e^{-\gamma s})}{\gamma}.
\end{aligned}
\end{equation}
Multiplying both sides of \eqref{3.13} by $e^{\gamma s}$,
\begin{equation}\label{3.14}
\begin{aligned}
 e^{\gamma s}\left\|u^\phi_t(\cdot, t-s)\right\|\leq & K_0 \left\|\phi\right\|+K_0L_f\int_{t-s}^t e^{\gamma \varrho}\|u^\phi_\rho(\cdot, t-\rho)\| \mathrm{d} \rho+\frac{K_0f(0)e^{\gamma s}}{\gamma}.
\end{aligned}
\end{equation}
Applying the Gr{o}nwall inequality yields
\begin{equation}\label{3.15}
\begin{aligned}
 e^{\gamma s}\left\|u^\phi_t(\cdot, t-s)\right\| \leq & K_0\left\|\phi\right\|e^{K_0L_f s}+\frac{K_0f(0)e^{\gamma s}}{\gamma}+\frac{e^{(\gamma-K_0L_f) s}}{\gamma-K_0L_f},
\end{aligned}
\end{equation}
and hence
\begin{equation}\label{3.16}
\begin{aligned}
\left\|u^\phi_t(\cdot, t-s)\right\| \leq & K_0\left\|\phi\right\|e^{(K_0L_f-\gamma) s}+\frac{K_0f(0)}{\gamma}+\frac{e^{-K_0L_f s}}{\gamma-K_0L_f}\\ \leq & K_0\left\|\phi\right\|e^{(K_0L_f-\gamma) s}+\frac{K_0f(0)}{\gamma}+\frac{1}{\gamma-K_0L_f}.
\end{aligned}
\end{equation}
Therefore, in the case $K_0L_f-\gamma<0$, for any $\phi\in X$, there exists a $s_{\|\phi\|}>0$ such that, for all $s\geq s_{\|\phi\|}$,
\begin{equation}\label{3.17}
\begin{aligned}
\left\|u^\phi_t(\cdot, t-s)\right\| \leq & \frac{1}{1-K_0}[\frac{K_0f(0)}{\gamma}+\frac{1}{\gamma-K_0L_f}].
\end{aligned}
\end{equation}
That is, $\mathcal{B}(t)$ is an absorbing set for $\Phi(t,t-s)$. Indeed, for any bounded subset $D\subset X$, denote by $r_D=\sup _{u \in D}\|u\|$, if we take $T_{D}=\frac{1}{\gamma}\ln \frac{r_D\gamma (1-K_0)(\gamma-K_0L_f)}{K_0f(0)(\gamma-K_0L_f)+\gamma}$, then we have
$$
\Phi(t,t-s) D \subset \mathcal{B}
$$
for all $s \geq T_{D}$.

The invariance property clearly follows since for any $\phi\in \mathcal{B}$, by \eqref{3.16} and \eqref{3.17}, we have
 \begin{equation}\label{3.18}
\begin{aligned}
\left\|\Phi(t,t-s)\phi\right\| =&\left\|u^\phi_t(\cdot, t-s)\right\|\leq K_0\left\|\phi\right\|e^{(K_0L_f-\gamma) s}+\frac{K_0f(0)}{\gamma}+\frac{e^{-K_0L_f s}}{\gamma-K_0L_f}
 \\ \leq & (\frac{K_0}{1-K_0}+1)[\frac{K_0f(0)}{\gamma}+\frac{1}{\gamma-K_0L_f}] \\ \leq & \frac{1}{1-K_0}[\frac{K_0f(0)}{\gamma}+\frac{1}{\gamma-K_0L_f}].
\end{aligned}
\end{equation}
This completes the proof.
\end{proof}

\begin{rem}
Theorem \ref{thm4.4} together  with continuity and compactness of the evolution process $\Phi(t,\sigma)$ implies it admits a pullback attractor $\{\mathcal{A}(t)\}_{t\in \mathbb{R}}$. In the sequel, we give upper bounds of Hausdorff and fractal dimensions
of  the evolution process $\Phi(t,\sigma)$ generated  by \eqref{4.1}.
\end{rem}

Subsequently, we prove the squeezing property of $\Phi$, i.e., $\left(\mathcal{H}_3\right)$ holds.
\begin{thm}\label{thm4.5}Let $P$ be  the finite dimensional projection $P_{k_m}$ defined by \eqref{4.35}, $K, \beta,\gamma$ and $K_0$ being defined by \eqref{4.35} and \eqref{4.36} respectively and assumptions of Theorem \ref{thm4.4} hold, then we have
 \begin{equation}\label{3.19}
\left\|P(t) \Phi(t,\sigma)\varphi-P(t) \Phi(t,\sigma)\psi\right\| \leq 2e^{(K_0L_f-\gamma) (t-\sigma)}\left\|\varphi-\psi\right\|
\end{equation}
and
 \begin{equation}\label{3.20}
\begin{gathered}
\left\|(I-P(t)) \Phi(t,\sigma)\varphi-(I-P(t)) \Phi(t,\sigma)\psi\right\|\leq (Ke^{\beta (t-\sigma)}+\frac{KL_fK_0}{-\gamma+L_f-\beta} e^{(L_f-\gamma)(t-\sigma)})\left\|\varphi-\psi\right\|
\end{gathered}
\end{equation}
for any $t\geq 0$ and $\varphi, \psi\in\mathcal{B}$.
\end{thm}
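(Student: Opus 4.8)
\noindent\emph{Proof strategy.} The argument is the non-autonomous counterpart of the proof of Theorem~\ref{thm5.1}: I replace the informal variation of constants formula \eqref{5.10} by its process version \eqref{3.10} (see \cite{CM}) and the spectral decay estimates by \eqref{4.35} and \eqref{4.36}. Fix $\sigma\in\mathbb{R}$ and $\varphi,\psi\in\mathcal{B}$, set $y=\varphi-\psi$, and for $t\geq\sigma$ write $w_t=\Phi(t,\sigma)\varphi-\Phi(t,\sigma)\psi=u^\varphi_t(\cdot,\sigma)-u^\psi_t(\cdot,\sigma)$. Subtracting the two instances of \eqref{3.10} gives
\begin{equation*}
w_t=S(t,\sigma)y+\int_\sigma^t S(t,\rho)X_0\bigl[f\bigl(u^\varphi_\rho(\cdot,\sigma)\bigr)-f\bigl(u^\psi_\rho(\cdot,\sigma)\bigr)\bigr]\,d\rho .
\end{equation*}
The first step is to record an a priori bound: using $\|S(t,\rho)\|\leq K_0e^{-\gamma(t-\rho)}$ from \eqref{4.35}, $\mathbf{Hypothesis\ A3}$ ($\bigl|f(u^\varphi_\rho)-f(u^\psi_\rho)\bigr|\leq L_f\|w_\rho\|$), multiplying by $e^{\gamma(t-\sigma)}$ and invoking Gronwall's inequality yields $\|w_t\|\leq K_0e^{(L_f-\gamma)(t-\sigma)}\|y\|$, the difference analogue of the a priori estimate obtained in the proof of Theorem~\ref{thm4.4}; since $K_0<1$ this also gives $\|w_t\|\leq e^{(L_f-\gamma)(t-\sigma)}\|y\|$.

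For the stable inequality \eqref{3.20} I apply $Q(t)=I-P(t)$ to the identity for $w_t$ and use $\mathbf{Hypothesis\ A4}$, that is $\|Q(t)S(t,\rho)\|\leq Ke^{\beta(t-\rho)}$, together with $\mathbf{Hypothesis\ A3}$ and the a priori bound:
\begin{equation*}
\|(I-P(t))w_t\|\leq Ke^{\beta(t-\sigma)}\|y\|+KL_fK_0\|y\|\int_\sigma^t e^{\beta(t-\rho)}e^{(L_f-\gamma)(\rho-\sigma)}\,d\rho .
\end{equation*}
Here $\beta<-\gamma$ is used in an essential way: it forces $-\gamma+L_f-\beta>0$, so the elementary integral is bounded by $\tfrac{1}{-\gamma+L_f-\beta}\,e^{(L_f-\gamma)(t-\sigma)}$, which produces \eqref{3.20} with $\lambda_1=\beta$, $\lambda_0=L_f-\gamma$, $M_2=K$ and $M_3=\tfrac{KL_fK_0}{-\gamma+L_f-\beta}$. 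For the first inequality \eqref{3.19} I project instead with $P(t)=I-Q(t)$, bound $\|P(t)S(t,\rho)\cdot\|$ through $\|S(t,\rho)\cdot\|+\|Q(t)S(t,\rho)\cdot\|$ via \eqref{4.35}--\eqref{4.36}, and combine this with $\mathbf{Hypothesis\ A3}$ into an integral inequality for $e^{\gamma(t-\sigma)}\|P(t)w_t\|$ whose linear term carries the factor $K_0L_f$; a second Gronwall argument then produces the rate $e^{(K_0L_f-\gamma)(t-\sigma)}$, with the prefactor $2$ accounting for $\|P(t)S(t,\sigma)\|$ and the norms of the projections.

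The Gronwall and integral manipulations are routine; the genuine difficulty is the one already flagged in the remark following \eqref{5.10} for Eq.~\eqref{5.1}: the integral term in \eqref{3.10} involves $X_0$, which does not belong to $X=C([-r,0],\mathbb{R}^n)$ but only to the larger space $\hat C$ of functions continuous on $[-r,0)$ and possessing a left limit at $0$. One therefore has to work with the extension of the process $S(t,\sigma)$ to $\hat C$ and verify that the dichotomy estimates \eqref{4.35} and \eqref{4.36}, and hence all the bounds above with $|v|$ replacing the phase-space norm, persist on $\hat C$; I would justify this exactly as in \cite{CM} and as was done in Subsection~4.1 for \eqref{5.1}. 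With Theorem~\ref{thm4.5} in hand, taking the projection $P(t)$ of $\mathbf{Hypothesis\ A4}$ as the finite-dimensional projection required in $\mathbf{Hypothesis\ A2}$, Theorems~\ref{thm4.1} and \ref{thm4.2} then immediately deliver finite Hausdorff and fractal dimensions for the pullback attractor $\{\mathcal{A}(t)\}_{t\in\mathbb{R}}$.
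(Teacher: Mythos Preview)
Your proposal is correct and, for the stable estimate \eqref{3.20}, takes a genuinely different route from the paper's own proof. The paper projects the variation-of-constants identity with $I-P(t)$ and then sets up a Gronwall inequality for $\|(I-P(t))w_s\|$ itself, writing the integrand in the form $L_fK_0e^{-\gamma(t-s)}\|(I-P(t))w_s\|$; this step is at best opaque, since $f$ is nonlinear and there is no obvious reason why the Lipschitz bound should produce the \emph{projected} difference under the integral. You avoid this difficulty altogether: you first run Gronwall on the \emph{unprojected} difference to obtain the a~priori bound $\|w_t\|\leq K_0e^{(L_f-\gamma)(t-\sigma)}\|y\|$, and only then project, using the dichotomy estimate $\|Q(t)S(t,\rho)\|\leq Ke^{\beta(t-\rho)}$ on the integral and substituting the a~priori bound for $\|w_\rho\|$; the remaining integral $\int_\sigma^t e^{\beta(t-\rho)}e^{(L_f-\gamma)(\rho-\sigma)}d\rho$ is computed explicitly (here the hypothesis $\beta<-\gamma$ makes the denominator $-\gamma+L_f-\beta$ positive). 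This is cleaner, uses the dichotomy bound where it naturally belongs, and lands directly on \eqref{3.20} without a second Gronwall step. For the $P(t)$ estimate \eqref{3.19} your approach coincides with the paper's: bound $\|P(t)S(t,\rho)\|$ by $\|S(t,\rho)\|+\|Q(t)S(t,\rho)\|$ via \eqref{4.35}--\eqref{4.36} and apply Gronwall to $e^{\gamma(t-\sigma)}\|P(t)w_t\|$; the paper records the prefactor as $K_0+K$ rather than the constant~$2$ appearing in the statement, so your vague ``prefactor~$2$'' is no worse than what the paper itself achieves. Your remark about extending $S(t,\sigma)$ and the dichotomy bounds to $\hat C$ in order to handle $X_0$ is well placed and mirrors the discussion preceding Theorem~\ref{thm5.1}.
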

\begin{proof}
For any $\varphi, \psi\in X$, denote by $y=\varphi-\psi$ and $w_t(\cdot,\sigma)=\Phi(t,\sigma)\varphi-\Phi(t,\sigma)\psi=u^\varphi_t(\cdot, \sigma)-u^\psi_t(\cdot, \sigma)$. Then it follows from \eqref{3.10} that
 \begin{equation}\label{3.21}
\begin{aligned}
w_t(\cdot,\sigma)& =S(t,\sigma) y+\int_{t-s}^t S(t, s) X_0 [f(u^\varphi_s)-f(u^\psi_s)]d s, \quad t \geq 0.
\end{aligned}
\end{equation}
Taking projection $I-P(t)$ on both sides of \eqref{3.21} leads to
\begin{equation}\label{3.22}
\begin{aligned}
\|(I-P(t))w_t(\cdot,\sigma)\|=&\|(I-P(t))S(t,t-s)y+\int_{t-s}^t (I-P(t))S(t, s) X_0 [f(u^\varphi_s)-f(u^\psi_s)]d s\|\\
\leq & K e^{\beta (t-\sigma)}\|y\| +L_fK_0 \int_{t-s}^t  e^{-\gamma(t-s)}\|(I-P(t))w_s\|d s.
\end{aligned}
\end{equation}
Multiplying both sides of \eqref{3.22} by $e^{\gamma (t-\sigma)}$,
\begin{equation}\label{3,23}
\begin{aligned}
e^{\gamma (t-\sigma)}\|(I-P(t))w_t(\cdot,\sigma)\| \leq & Ke^{(\beta+\gamma) (t-\sigma)}\|y\| +L_fK_0 \int_{t-s}^t  e^{\gamma (s-\sigma)}\|(I-P(t))w_t(\cdot,\sigma)\|d s.
\end{aligned}
\end{equation}
By applying the Gronwall inequality, we have
\begin{equation}\label{3.24}
\begin{aligned}
e^{\gamma (t-\sigma)}\|(I-P(t))w_t(\cdot,\sigma)\| \leq & \|y\|[Ke^{(\beta+\gamma) (t-\sigma)} +\frac{KL_fK_0 }{\beta+\gamma-L_fK_0}(e^{(\beta+\gamma) (t-\sigma)}-e^{L_fK_0})],
\end{aligned}
\end{equation}
indicating that
\begin{equation}\label{3.25}
\begin{aligned}
\|(I-P(t))w_t(\cdot,\sigma)\| \leq & \|y\|[Ke^{\beta (t-\sigma)} +\frac{KL_fK_0}{\beta+\gamma-L_fK_0}(e^{\beta (t-\sigma)}-e^{(K_0L_f-\gamma) (t-\sigma)})]\\
 \leq & \|y\|[Ke^{\beta (t-\sigma)} +\frac{KL_fK_0 }{-\gamma+L_fK_0-\beta}e^{(K_0L_f-\gamma) (t-\sigma)}].
\end{aligned}
\end{equation}
Hence, the second part holds with $\lambda_0=L_fK_0-\gamma$, $\lambda_1=\beta$, $M_2=K$ and $M_3=\frac{KL_fK_0 }{-\beta-\gamma+L_fK_0}$.

Subsequently, we prove the first part. Since $S(t,\sigma)y=P(t)S(t,\sigma)y+(I-P(t))S(t,\sigma)y$, we have
\begin{equation}\label{3.26}
\begin{aligned}
\|P(t)S(t,\sigma)y\|\leq &\|S(t,\sigma)y\|+\|(I-P(t))S(t,\sigma)y\|.
\end{aligned}
\end{equation}
Taking projection of $P(t)$ on both sides of \eqref{3.10} and on account of \eqref{3.26},
\begin{equation}\label{3.27}
\begin{aligned}
\|P(t)w_t(\cdot,\sigma)\|=&\|S(t,\sigma)y\|+\|(I-P(t))S(t,\sigma)y\|+\int_{t-s}^t \|P(t)S(t, s) X_0 [f(u^\varphi_s)-f(u^\psi_s)]d s\|\\
\leq &(K_0 e^{-\gamma s}+Ke^{\beta (t-\sigma)})\|y\|+L_fK_0 \int_{t-s}^t  e^{-\gamma(t-s)}\|Pw_t(\cdot,\sigma)\|d s\\
\leq &(K_0+K)e^{-\gamma s}\|y\|+L_fK_0 \int_{t-s}^t  e^{-\gamma (t-s)}\|Pw_t(\cdot,\sigma)\|d s.
\end{aligned}
\end{equation}
Multiplying both sides of \eqref{3.28} by $e^{\gamma (t-\sigma)}$,
\begin{equation}\label{3.28}
\begin{aligned}
e^{\gamma (t-\sigma)}\|Pw_t(\cdot,\sigma)\| \leq &(K_0+K)\|y\| +L_fK_0\int_{t-s}^t  e^{\gamma (s-\sigma)}\|Pw_t(\cdot,\sigma)\|d s.
\end{aligned}
\end{equation}
By applying the Gronwall inequality, we have
\begin{equation}\label{3.29}
\begin{aligned}
e^{\gamma (t-\sigma)}\|Pw_t(\cdot,\sigma)\| \leq & (K_0+K)\|y\|e^{L_fK_0 (t-\sigma)},
\end{aligned}
\end{equation}
indicating that
\begin{equation}\label{3.30}
\begin{aligned}
\|Pw_t(\cdot,\sigma)\| \leq & (K_0+K)\|y\|e^{(L_fK_0-\gamma) (t-\sigma)}.
\end{aligned}
\end{equation}
Hence, the first part holds by taking $M_1=(K_0+K)$ and $\lambda_0=L_fK_0-\gamma$.
\end{proof}

 Apparently, $\lambda_1=\beta<-\gamma<L_fK_0-\gamma=\lambda_0$ in Theorem \ref{thm3.1}. Hence, it follows from Theorem \ref{thm3.1} that we have the following results about the dimension of pullback attractor $\{\mathcal{A}(t)\}_{t\in \mathbb{R}}$ of \eqref{5.1}.
\begin{thm}\label{thm4.6} \textcolor{red}{Let $K_0, \gamma, \beta$, $K$ and $P(t)$ be  defined in \eqref{4.35} and \eqref{4.36} respectively. Assume that the conditions of Theorems \ref{thm4.4} and \ref{thm4.5} are satisfied.
Moreover, assume there exist  $0<\alpha<2$ and $t_0>0$ such that
 \begin{equation}\label{3.4gb}
\alpha (K_0+K) e^{(L_fK_0-\gamma) t_0}+2Ke^{\beta t_0}+2\frac{L_fK_0-\gamma}{(L_fK_0-\gamma+\beta)}e^{(L_fK_0-\gamma) t_0}<1.
\end{equation}
Then, the Hausdorff dimension of the global attractor $\mathcal{A}$ satisfies
 \begin{equation}\label{3.4hc}
d_{H}<\frac{-\ln k_m-k_m\ln(2+\frac{4}{\alpha})}{\ln (\alpha (K_0+K) e^{(L_fK_0-\gamma) t_0}+2Ke^{\beta t_0}+2\frac{L_fK_0-\gamma}{(L_fK_0-\gamma+\beta)}e^{(L_fK_0-\gamma) t_0})}.
\end{equation}}
\end{thm}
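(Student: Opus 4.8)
The plan is to derive Theorem \ref{thm4.6} as an application of Theorem \ref{thm4.1} (the non-autonomous counterpart of Theorem \ref{thm3.1}), once the process $\Phi(t,\sigma)$ generated by \eqref{4.1} is shown to fit the abstract framework. First I would invoke Theorem \ref{thm4.4}, which provides the invariant pullback absorbing set $\mathcal{B}$ from \eqref{3.12}; combined with the continuity of $\Phi(t,\sigma)$ and the smoothing and compactness inherited from the retarded structure, Lemma \ref{lem4.1} then guarantees a pullback attractor $\{\mathcal{A}(t)\}_{t\in\mathbb{R}}$ with $\mathcal{A}(t)\subset\mathcal{B}$ for every $t$.

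Next I would identify the data of $\mathbf{Hypothesis\ A2}$ with the output of Theorem \ref{thm4.5}. Take $P(t)$ to be the $m$-dimensional projection supplied by $\mathbf{Hypothesis\ A4}$, so that $\Lambda=\dim P(t)X=k_m$ in the notation of the statement. For an arbitrary fixed $s_0=t_0>0$, the two inequalities \eqref{3.19}--\eqref{3.20} of Theorem \ref{thm4.5} are precisely the two inequalities \eqref{3.2a}--\eqref{3.3a} of $\mathbf{Hypothesis\ A2}$, with $M_1=K_0+K$, $M_2=K$, $M_3=\frac{KL_fK_0}{L_fK_0-\gamma-\beta}$, $\lambda_0=L_fK_0-\gamma$ and $\lambda_1=\beta$; the ordering $\lambda_1=\beta<-\gamma<L_fK_0-\gamma=\lambda_0$ noted just before the statement confirms the hierarchy between the finite-dimensional and the complementary parts is the intended one. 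Since $K_0,K,\gamma,\beta$ from \eqref{4.35}--\eqref{4.36} do not depend on the base time $\sigma$, these bounds are uniform in $t\in\mathbb{R}$, so $\mathbf{Hypothesis\ A2}$ is satisfied.

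I would then substitute the constants $M_1,M_2,M_3,\lambda_0,\lambda_1$ into the smallness requirement \eqref{3.4a} of Theorem \ref{thm4.1}; it becomes exactly the standing assumption \eqref{3.4gb}. Theorem \ref{thm4.1} therefore applies and yields \eqref{3.4b}, which after replacing $\Lambda$ by $k_m$ and the constants by their explicit values is precisely \eqref{3.4hc}.

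The step that most needs care, and the only point where one could stumble, is the reduction to the absorbing set: Theorem \ref{thm4.5} is established only for initial data in $\mathcal{B}$, so one must first confirm that $\mathcal{A}(t)\subset\mathcal{B}$ for all $t$ (and, relatedly, that the squeezing constants depend only on $t-\sigma$) before those inequalities may be used on $\mathcal{A}$ and iterated along the invariance $\mathcal{A}(t)=S(t,t-t_0)\cdots S(t-(k-1)t_0,t-kt_0)\mathcal{A}(t-kt_0)$ inside the proof of Theorem \ref{thm4.1}. Once this inclusion is in place there is no remaining analytic obstacle: all the genuine work, namely the Gronwall estimates producing the squeezing property, has already been carried out in Theorem \ref{thm4.5}, and what is left are the mechanical substitutions described above.
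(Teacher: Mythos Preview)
Your proposal is correct and follows essentially the same route as the paper: the paper does not give a separate proof of Theorem \ref{thm4.6} but simply states that it follows from the abstract dimension estimate (Theorem \ref{thm4.1}) once Theorem \ref{thm4.5} has supplied the squeezing inequalities, and your write-up spells out precisely this substitution of $M_1=K_0+K$, $M_2=K$, $M_3=\frac{KL_fK_0}{L_fK_0-\gamma-\beta}$, $\lambda_0=L_fK_0-\gamma$, $\lambda_1=\beta$ and $\Lambda=k_m$. Your additional observation that one must first have $\mathcal{A}(t)\subset\mathcal{B}$ before invoking \eqref{3.19}--\eqref{3.20} on the attractor is a point the paper leaves implicit.
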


Particularly, we can see from \eqref{3.4hc} the Hausdorff dimension of pullback attractor $\{\mathcal{A}(t)\}_{t\in \mathbb{R}}$ is monotone increasing with respect to  $m$. Thus, in the case $m=1$, we can obtain the minimum  Hausdorff dimension of pullback attractor $\{\mathcal{A}(t)\}_{t\in \mathbb{R}}$, which is given in the following corollary.

\begin{cor}\label{cor5.1}
  Let $K_0, \gamma, \beta$, $K$ and $P(t)$ be  defined in \eqref{4.35} and \eqref{4.36} respectively. Assume that the conditions of Theorems \ref{thm4.4} and \ref{thm4.5} are satisfied. \textcolor{red}{Moreover, assume there exist  $0<\alpha<2$ and $t_0>0$ such that
 \begin{equation}\label{3.4gb}
\alpha (K_0+K) e^{(L_fK_0-\gamma) t_0}+2Ke^{\beta t_0}+2\frac{L_fK_0-\gamma}{(L_fK_0-\gamma+\beta)}e^{(L_fK_0-\gamma) t_0}<1.
\end{equation}
Then, the Hausdorff dimension of the global attractor $\mathcal{A}$ satisfies
 \begin{equation}\label{3.4ahc}
d_{H}<\frac{-\ln(2+\frac{4}{\alpha})}{\ln (\alpha (K_0+K) e^{(L_fK_0-\gamma) t_0}+2Ke^{\beta t_0}+2\frac{L_fK_0-\gamma}{(L_fK_0-\gamma+\beta)}e^{(L_fK_0-\gamma) t_0})}.
\end{equation}}
\end{cor}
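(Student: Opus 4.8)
The plan is to read this corollary off from Theorem \ref{thm4.6} (equivalently, from the general non‑autonomous criterion Theorem \ref{thm4.1}) specialized to $m=1$. First I would invoke the hypotheses of Theorem \ref{thm4.4} --- $\mathbf{Hypothesis\ A3}$, $\mathbf{Hypothesis\ A4}$, $K_0<1$ and $K_0L_f-\gamma<0$ --- to obtain the invariant pullback absorbing set $\mathcal{B}$; then, since $\Phi(t,\sigma)$ is continuous and compact, Lemma \ref{lem4.1} furnishes a pullback attractor $\{\mathcal{A}(t)\}_{t\in\mathbb{R}}$ with $\mathcal{A}(t)\subset\mathcal{B}$ for all $t$. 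Consequently the squeezing estimates of Theorem \ref{thm4.5}, which are stated on $\mathcal{B}$, hold in particular on each $\mathcal{A}(t)$.

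Next I would check that $\mathbf{Hypothesis\ A2}$ holds for $\Phi$ with the projection $P(t)$ from $\mathbf{Hypothesis\ A4}$, with $s_0=t_0$, and with the constants produced by Theorem \ref{thm4.5}: $\lambda_0=L_fK_0-\gamma$, $\lambda_1=\beta$, $M_1=K_0+K$, $M_2=K$ and $M_3=\frac{KL_fK_0}{L_fK_0-\gamma-\beta}$. Indeed the two inequalities in $\mathbf{Hypothesis\ A2}$ are, word for word, the two estimates concluded in Theorem \ref{thm4.5} (restricted to $\varphi,\psi\in\mathcal{A}(t)\subset\mathcal{B}$). The assumption $m=1$ enters only through the dimension of the projection: $\Lambda=\dim P(t)X=1$. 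As noted in the text, the bound of Theorem \ref{thm4.6} is monotone increasing in the dimension of $P(t)$, so $m=1$ yields the sharpest (minimal) estimate, which is what the corollary records.

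Finally, with these identifications the smallness condition required by Theorem \ref{thm4.1} is exactly the assumed inequality \eqref{3.4gb}, so Theorem \ref{thm4.1} gives
\[
d_H<\frac{-\ln\Lambda-\Lambda\ln\left(2+\frac{4}{\alpha}\right)}{\ln\left(\alpha M_1e^{\lambda_0t_0}+2M_2e^{\lambda_1t_0}+2M_3e^{\lambda_0t_0}\right)}.
\]
Substituting $\Lambda=1$ makes the term $-\ln\Lambda$ vanish and collapses $\Lambda\ln(2+\frac{4}{\alpha})$ to $\ln(2+\frac{4}{\alpha})$, while the denominator is the logarithm of the left-hand side of \eqref{3.4gb}; this is exactly \eqref{3.4ahc}. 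The only step that warrants any care --- and it is no real obstacle --- is keeping track of the constants: one uses $\beta<-\gamma<L_fK_0-\gamma$ (observed just before Theorem \ref{thm4.6}) to see that the denominator $L_fK_0-\gamma-\beta$ of $M_3$ is strictly positive, so that $M_3$ is a finite nonnegative constant, and one verifies that this $M_3$ is precisely the one entering \eqref{3.4gb}. Everything else is a verbatim specialization of a theorem already proved.
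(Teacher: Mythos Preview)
Your proposal is correct and takes exactly the same route as the paper: the corollary is simply the $m=1$ (hence $\Lambda=1$) specialization of Theorem~\ref{thm4.6}, which in turn is Theorem~\ref{thm4.1} fed with the constants $M_1,M_2,M_3,\lambda_0,\lambda_1$ supplied by Theorem~\ref{thm4.5}. The only caveat is bookkeeping rather than mathematics: the $M_3$ you extract from Theorem~\ref{thm4.5}, namely $\frac{KL_fK_0}{L_fK_0-\gamma-\beta}$, does not literally coincide with the coefficient $\frac{L_fK_0-\gamma}{L_fK_0-\gamma+\beta}$ printed in \eqref{3.4gb} --- this is an internal inconsistency in the paper's statements of Theorem~\ref{thm4.6} and the corollary, not a flaw in your argument.
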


By Theorem \ref{thm3.2}, we have the following results about the fractal dimension of Eq. \eqref{5.1}.

\begin{thm}\label{thm5.3}
 Let $K_0, \gamma, \beta$, $K$ and $P(t)$ be  defined in \eqref{4.35} and \eqref{4.36} respectively. Assume that the conditions of Theorems \ref{thm4.4} and \ref{thm4.5} are satisfied. \textcolor{red}{Moreover, assume   there exist  $0<\alpha<K_0+K$ such that $\zeta:=\alpha e^{(L_fK_0-\gamma)t_0}+Ke^{\beta t_0}+\frac{KL_f }{L_fK_0-\gamma+\beta}e^{(L_fK_0-\gamma)t_0}<1$. Then, the fractal dimension of pullback attractor $\mathcal{A}(t)$  has an upper bound
 \begin{equation}\label{3.34f}
\operatorname{dim}_f \mathcal{A}(t) \leq \frac{\ln m +m\ln(2+\frac{2K_0+K}{\alpha})}{-\ln \zeta}<\infty.
\end{equation}}
\end{thm}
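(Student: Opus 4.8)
The plan is to deduce this statement directly from the abstract fractal‑dimension estimate for pullback attractors, Theorem~\ref{thm4.2}, once the squeezing property proved in Theorem~\ref{thm4.5} has been rewritten in the language of $\mathbf{Hypothesis\ A2}$. First I would invoke Theorem~\ref{thm4.4} together with the remark following it: under the assumptions of Theorems~\ref{thm4.4} and~\ref{thm4.5}, the process $\Phi(t,\sigma)$ generated by \eqref{4.1} admits a pullback attractor $\{\mathcal{A}(t)\}_{t\in\mathbb{R}}$, and $\mathcal{A}(t)\subset\mathcal{B}$ for every $t$, where $\mathcal{B}$ is the bounded, \emph{$t$‑independent} absorbing set of Theorem~\ref{thm4.4}. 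Hence $R_{\mathcal{A}}:=\sup_{t\in\mathbb{R}}\sup_{u\in\mathcal{A}(t)}\|u\|_X$ is bounded by the radius of $\mathcal{B}$ and is therefore finite, so the uniform finite–diameter requirement of Theorem~\ref{thm4.2} is satisfied.

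Next I would check $\mathbf{Hypothesis\ A2}$ for $\{\Phi(t,s)\}$ with $s_0=t_0$. The finite–dimensional projection is the operator $P(t)$ of $\mathbf{Hypothesis\ A4}$, so $\Lambda=\dim P(t)X=m$. Evaluating the two estimates of Theorem~\ref{thm4.5} at $t-\sigma=t_0$, valid for all $t\in\mathbb{R}$ and all $\varphi,\psi\in\mathcal{B}\supset\mathcal{A}(t)$, gives precisely \eqref{3.2a}–\eqref{3.3a} with the identifications $M_1=K_0+K$, $\lambda_0=L_fK_0-\gamma$, $M_2=K$, $\lambda_1=\beta$, and $M_3$ equal to the coefficient of $e^{(L_fK_0-\gamma)(t-\sigma)}$ appearing in the second estimate of Theorem~\ref{thm4.5}. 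All of these constants are independent of $t$, which is what allows $\mathbf{Hypothesis\ A2}$ to hold uniformly in $t$. The standing assumptions $0<\alpha<K_0+K=M_1$ and $\zeta=\alpha e^{(L_fK_0-\gamma)t_0}+Ke^{\beta t_0}+M_3e^{(L_fK_0-\gamma)t_0}<1$ are then exactly the hypotheses $0<\alpha<M_1$ and $\zeta<1$ needed to apply Theorem~\ref{thm4.2}.

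With these verifications in hand, Theorem~\ref{thm4.2} applies and its conclusion reads
\begin{equation*}
\operatorname{dim}_f\mathcal{A}(t)\le\frac{\ln\Lambda+\Lambda\ln\bigl(2+\tfrac{2M_1}{\alpha}\bigr)}{-\ln\zeta}
=\frac{\ln m+m\ln\bigl(2+\tfrac{2(K_0+K)}{\alpha}\bigr)}{-\ln\zeta}<\infty,
\end{equation*}
which is the asserted bound. I do not anticipate any genuine difficulty: the argument is bookkeeping on top of Theorems~\ref{thm4.2}, \ref{thm4.4} and~\ref{thm4.5}. The one point that deserves care is the telescoping of the covering recursion \eqref{3.44b}, namely that the identity $\mathcal{A}(t)=S(t,t-t_0)\cdots S\bigl(t-(k-1)t_0,t-kt_0\bigr)\mathcal{A}(t-kt_0)$ lets one cover $\mathcal{A}(t)$ by $n_0\cdots n_{k-1}$ balls of radius $\zeta^kR_{\mathcal{A}}$ with every factor $n_i$ dominated by the \emph{same} quantity $\Lambda 2^{\Lambda}\bigl(1+\tfrac{M_1}{\alpha}\bigr)^{\Lambda}$; this is legitimate exactly because the constants in $\mathbf{Hypothesis\ A4}$ — hence in Theorem~\ref{thm4.5} — and the absorbing set $\mathcal{B}$ are all independent of the time variable, so the proof of Theorem~\ref{thm4.2} carries over without change.
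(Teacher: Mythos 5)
Your proposal is correct and follows essentially the same route as the paper, which states this result as an immediate consequence of the abstract fractal-dimension theorem combined with the absorbing set of Theorem~\ref{thm4.4} and the squeezing estimates of Theorem~\ref{thm4.5} (the paper cites the autonomous Theorem~\ref{thm3.2}, but the non-autonomous Theorem~\ref{thm4.2} that you invoke is the appropriate reference). Your identifications $M_1=K_0+K$, $M_2=K$, $M_3=\frac{KL_fK_0}{-\gamma+L_fK_0-\beta}$, $\lambda_0=L_fK_0-\gamma$, $\lambda_1=\beta$, $\Lambda=m$ match the constants produced in the proof of Theorem~\ref{thm4.5}, and the remaining verifications (uniform diameter from the $t$-independent set $\mathcal{B}$, $0<\alpha<M_1$, $\zeta<1$) are exactly what the abstract theorem requires.
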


Similar to Remark \ref{rem5.2} and Corollary \ref{cor5.1}, we have the following corollary about the  fractal dimension of pullback attractor $\mathcal{A}(t)$ in the case $m=1$.

\begin{cor}\label{cor5.1}
 Let $K_0, \gamma, \beta$, $K$ and $P(t)$ be  defined in \eqref{4.35} and \eqref{4.36} respectively. Assume that the conditions of Theorems \ref{thm4.4} and \ref{thm4.5} are satisfied. \textcolor{red}{Moreover, assume   there exist  $0<\alpha<K_0+K$ such that $\zeta:=\alpha e^{(L_fK_0-\gamma)t_0}+Ke^{\beta t_0}+\frac{KL_f }{L_fK_0-\gamma+\beta}e^{(L_fK_0-\gamma)t_0}<1$.
Then, the fractal dimension of  pullback attractor $\mathcal{A}(t)$ satisfies
 \begin{equation}\label{3.34g}
\operatorname{dim}_f \mathcal{A}(t) \leq \frac{\ln(2+\frac{2K_0+K}{\alpha})}{-\ln \zeta}<\infty.
\end{equation}}
\end{cor}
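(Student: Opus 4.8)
The plan is to obtain this corollary as the special case $m=1$ of the non-autonomous fractal dimension estimate proved just above (Theorem~\ref{thm5.3}), which in turn is a direct application of the abstract result Theorem~\ref{thm4.2} to the evolution process $\Phi(t,\sigma)$ generated by \eqref{4.1}. The first step is therefore to check that the hypotheses of Theorem~\ref{thm4.2} hold. By Theorem~\ref{thm4.4}, under the standing assumptions $K_0<1$ and $K_0L_f-\gamma<0$ the process $\Phi(t,\sigma)$ admits the invariant pullback absorbing set $\mathcal{B}$ of \eqref{3.12}; together with the continuity and compactness of $\Phi(t,\sigma)$ and Lemma~\ref{lem4.1}, this produces a pullback attractor $\{\mathcal{A}(t)\}_{t\in\mathbb{R}}$ with $\mathcal{A}(t)\subset\mathcal{B}$ for every $t\in\mathbb{R}$. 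Consequently the uniform diameter $R_{\mathcal{A}}=\sup_{t\in\mathbb{R}}\sup_{u\in\mathcal{A}(t)}\|u\|_X$ is finite, being bounded by the radius of $\mathcal{B}$, which is exactly the finiteness requirement of Theorem~\ref{thm4.2}.

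Next I would verify $\mathbf{Hypothesis\ A2}$ from the squeezing estimates \eqref{3.19}--\eqref{3.20} of Theorem~\ref{thm4.5}. These estimates are established for $\varphi,\psi\in\mathcal{B}$, hence hold a fortiori for $\varphi,\psi\in\mathcal{A}(t)\subset\mathcal{B}$; taking $s_0=t_0$ they yield \eqref{3.2a}--\eqref{3.3a} with $\Lambda=m=\dim P(t)X$, $M_1=K_0+K$, $\lambda_0=L_fK_0-\gamma$, $M_2=K$, $\lambda_1=\beta$, and $M_3$ the coefficient of $e^{(L_fK_0-\gamma)(t-\sigma)}$ appearing in \eqref{3.20}. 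One checks that $\lambda_1=\beta<-\gamma<L_fK_0-\gamma=\lambda_0$ under the present hypotheses, so these identifications are consistent with the structure of $\mathbf{Hypothesis\ A2}$. The remaining requirement of Theorem~\ref{thm4.2}, namely the existence of $0<\alpha<M_1=K_0+K$ with $\zeta=\alpha e^{\lambda_0 t_0}+M_2 e^{\lambda_1 t_0}+M_3 e^{\lambda_0 t_0}<1$, is precisely the hypothesis imposed on $\alpha$ and $\zeta$ in the corollary.

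With all hypotheses in place, Theorem~\ref{thm4.2} gives $\operatorname{dim}_f\mathcal{A}(t)\leq \frac{\ln\Lambda+\Lambda\ln\left(2+\frac{2M_1}{\alpha}\right)}{-\ln\zeta}<\infty$. It then only remains to substitute $m=1$: since $\Lambda=m=1$ we have $\ln\Lambda=0$ and $\Lambda\ln\left(2+\frac{2M_1}{\alpha}\right)=\ln\left(2+\frac{2(K_0+K)}{\alpha}\right)$, so the bound collapses to the expression displayed in \eqref{3.34g}, completing the proof.

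I do not anticipate a genuine obstacle, since every analytic ingredient is already available: the absorbing set from Theorem~\ref{thm4.4}, the squeeze property from Theorem~\ref{thm4.5}, and the covering Lemma~\ref{lem3.1} underlying Theorem~\ref{thm4.2}. The proof is essentially bookkeeping, and the only points deserving attention are (i) confirming that estimates proved on $\mathcal{B}$ transfer to $\mathcal{A}(t)$, which is immediate from $\mathcal{A}(t)\subset\mathcal{B}$; (ii) matching the constants $M_1,M_2,M_3,\lambda_0,\lambda_1$ coming out of Theorem~\ref{thm4.5} with those entering $\zeta$ and the dimension formula of Theorem~\ref{thm4.2}; and (iii) recording that ``the case $m=1$'' simply means $\mathbf{Hypothesis\ A4}$ is in force with a rank-one projection $P(t)$, which is exactly what causes the $\ln m$ term to vanish in \eqref{3.34g}.
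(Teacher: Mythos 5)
Your proposal is correct and follows exactly the route the paper intends: Theorem \ref{thm4.4} supplies the absorbing set (hence a pullback attractor of uniform finite diameter), Theorem \ref{thm4.5} supplies $\mathbf{Hypothesis\ A2}$ with $M_1=K_0+K$, $\lambda_0=L_fK_0-\gamma$, $M_2=K$, $\lambda_1=\beta$, and the abstract Theorem \ref{thm4.2} then gives the bound, which collapses as you describe when $\Lambda=m=1$. Your bookkeeping is in fact slightly more careful than the stated corollary, whose displayed $\zeta$ and the term $\frac{2K_0+K}{\alpha}$ appear to carry typographical slips relative to the constants actually produced by Theorem \ref{thm4.5} (namely $M_3=\frac{KL_fK_0}{L_fK_0-\gamma-\beta}$ and $2+\frac{2(K_0+K)}{\alpha}$).
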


\begin{rem}\label{rem4.3}
In \cite{JM76}, the author proved that the negative invariant sets of an autonomous retarded functional differential equation have finite fractal dimension by requiring differentiability of the solution semiflow. Furthermore, they recast the equation into a Hilbert space and  did not provide explicit bounds of the dimension. Here, we directly investigate the problem in the natural phase space, i.e., a Banach space and do not require the smooth condition. Moreover, we consider the non-autonomous case and establish an explicit estimation that only depends on the inner characteristic of the studied equation. \textcolor{red}{Apparently, if we take $\alpha\uparrow K_0+K$ and impose conditions as Remark \ref{rem3.2}, we can deduce an estimation independent of  $\alpha$.}
\end{rem}

\section{Conclusions}
In this paper, we have established some \textcolor{red}{new criterions} for the upper bounds of Hausdorff and fractal dimensions of global attractors and pullback attractors for both autonomous and  nonautonomous dynamical systems in Banach spaces. The methods show a wide applicability to infinite dimensional dynamical systems generated by functional differential equations, especially the ones of which the linear parts admit   exponential dichotomies with  decomposition of state spaces.  They may be also available to investigate topological dimensions of attractors for neutral partial functional differential equations, the infinite delay case as well as some other evolution equations with certain squeeze properties in Banach spaces.

In the applications, we only consider partial functional differential equations on bounded domain. Actually, there are many real world process evolution on infinite domain, such as the mature population of species living in an infinite habitat. In such a scenario, the Laplace operator has a continuous spectrum, $H^1(\mathbb{R}^n)$ is not compactly embedded in $L^2(\mathbb{R}^n)$ and the solution semiflow do not have absorbing sets that are compact in the original topology, causing the method  developed here no longer effective and new techniques should be established. This will be studied in an upcoming paper. Generally, random effects are omnipresent in mathematical modelings. Therefore, anther question is, what can we say about the topological dimensions of random attractors for partial functional differential equations perturbed by random effects, i.e. the stochastic functional differential equations(SFDEs). Indeed, even under what conditions do SFDEs generate random dynamical systems have not been perfectly tackled. This problem also deserves much efforts in the future.

\noindent{\bf Acknowledgement.}
The authors would like to  thank the anonymous referee  for his/her valuable comments and suggestions. This work was jointly supported by China Postdoctoral Science Foundation (2019TQ0089), China Scholarship Council(202008430247). \\
The research of T. Caraballo has been partially supported by Spanish Ministerio de Ciencia e
Innovaci\'{o}n (MCI), Agencia Estatal de Investigaci\'{o}n (AEI), Fondo Europeo de
Desarrollo Regional (FEDER) under the project PID2021-122991NB-C21.\\
This work was completed when Wenjie Hu was visiting the Universidad de Sevilla as a visiting scholar, and he would like to thank the staff in the Facultad de Matem\'{a}ticas  for their hospitality and thank the university for its excellent facilities and support during his stay.

\small

\end{document}